\documentclass[10pt]{amsart}
\usepackage[normalem]{ulem}
\usepackage{bbm}
\usepackage[usenames,dvipsnames,svgnames,table]{xcolor}
\usepackage[pagebackref,colorlinks=true,linkcolor=BrickRed,citecolor=OliveGreen,pdfstartview=FitH]{hyperref}
\usepackage{graphicx}
\usepackage{float}
\usepackage{comment}
\usepackage{enumitem}
\usepackage{marginnote}
\usepackage{mathrsfs}
\usepackage{xcolor}

\newtheorem{thm}{Theorem}[section]

\newtheorem{coro}[thm]{Corollary}
\newtheorem{lem}[thm]{Lemma}
\newtheorem{prop}[thm]{Proposition}

\theoremstyle{definition}

\theoremstyle{remark}

\newtheorem{remk}[thm]{Remark}

\newcommand{\im}{{\rm im}}
\newcommand{\vol}{{\rm vol}}

\newcommand{\tgamma}{\widetilde{\gamma}}

\newcommand{\Rbb}{ {\mathbb R}}
\newcommand{\Zbb}{ {\mathbb Z}}

\newcommand{\Hcal}{ {\mathcal H}}
\newcommand{\Mcal}{ {\mathcal M}}
\newcommand{\Dcal}{ {\mathcal D}}

\newcommand{\Ucal}{ {\mathcal U}}

\newcommand{\Vcal}{ {\mathcal V}}

\newcommand{\Ycal}{ {\mathcal Y}}

\newcommand{\Scal}{ {\mathcal S}}

\newcommand{\Pcal}{ {\mathcal P}}

\newcommand{\Tcal}{ {\mathcal T}}
\newcommand{\Rcal}{ {\mathcal R}}

\newcommand{\Wcal}{ {\mathcal W}}

\newcommand{\Sbb}{{\mathbb S}}

\newcommand{\spec}{{\rm spec}}

\renewcommand{\phi}{\varphi}


\usepackage{mathabx}
\usepackage{amssymb}
\usepackage{mathrsfs}







\definecolor{olive}{rgb}{0.13, 0.55, 0.13}

\definecolor{olive}{rgb}{0.5, 0.5, 0.0}

\definecolor{magenta}{rgb}{1.0, 0.0, 1.0}
\newcommand{\craig}[1]{\textcolor{magenta}{#1}}


\title[Multiplicity and nodal domains of torus invariant metrics]{Spectral 
multiplicity and nodal sets for generic torus-invariant metrics}

\author{Donato Cianci} 
\address{GEICO, Chevy Chase, MD}
\email{\href{mailto:dcianci@geico.com}{dcianci@geico.com}}

\author{Chris Judge$^\dagger$}
\address{Indiana University, Department of Mathematics, Bloomington, IN 47405} 
\email{\href{mailto:cjudge@indiana.edu}{cjudge@indiana.edu}}
\thanks{$^\dagger$ The work of C.J.\ is partially supported by a Simons 
Foundation Collaboration Grant. }

\author[S. Lin]{Samuel Lin}
\address{University of Oklahoma\\ Department of Mathematics \\ Norman, OK 73071}
\email{\href{mailto:samuel.z.lin-1@ou.edu}{samuel.z.lin-1@ou.edu}}

\author[C. Sutton]{Craig Sutton$^\sharp$}
\address{Dartmouth College\\ Department of Mathematics \\ Hanover, NH 03755}
\email{\href{mailto:craig.j.sutton@dartmouth.edu}{craig.j.sutton@dartmouth.edu}}
\thanks{$^\sharp$ The work of C.S. is partially supported by a Simons Foundation Collaboration Grant}

\begin{document}

\maketitle

\begin{abstract}
Let a torus $T$ act freely on a closed manifold $M$ of dimension at least two. 
We demonstrate that, for a generic $T$-invariant Riemannian metric $g$ on $M$,
each real $\Delta_g$-eigenspace is an irreducible real representation of $T$ and,
therefore, has dimension at most two. We also show that,
for the generic $T$-invariant metric on $M$, if
$u$ is a non-invariant real-valued $\Delta_g$-eigenfunction that vanishes 
on some $T$-orbit, then the nodal set of $u$ is a connected smooth hypersurface 
whose complement has exactly two connected components. 
\end{abstract}

\section{Introduction}

Karen Uhlenbeck \cite{Uhlenbeck} proved that, for the
generic metric $g$ on a smooth closed  manifold $M$ of dimension $n >1$,
the associated Laplacian $\Delta_g$ acting on functions has 
one-dimensional eigenspaces. On the other hand, metrics that are invariant 
under a group action often have Laplace eigenspaces with
dimension greater than one.
For example, if a torus acts freely on $M$ and preserves $g$, 
then each non-invariant, real-valued eigenfunction of $\Delta_g$
lies in an eigenspace of dimension at least two. 
Here we 
show that for the generic torus invariant metric, the dimension of 
each eigenspace is at most two.

\begin{thm}
\label{thm:main}
Let $M$ be a smooth connected closed manifold of dimension $n>d$
such that the $d$-dimensional torus $T^d$ acts freely 
and smoothly on $M$.
Then for each $\ell \geq 2$,
there exists a residual subset of the space of the torus\craig{-}invariant 
$C^{\ell}$ Riemannian metrics on $M$ such that 
if $g$ lies in this residual set, then: 

\begin{enumerate}

\vspace{.1cm}

\item  Each (real) eigenspace of $\Delta_g$ has dimension at most two.
More precisely, if an eigenfunction $\phi: M \to \Rbb$
is torus-invariant, then $u$ lies in a one-dimensional eigenspace 
and otherwise $\phi$ lies in a 2-dimensional eigenspace on which
$T^d$ acts irreducibly.

\vspace{.25cm}

\item The nodal set $\phi^{-1}(0)$ of each eigenfunction $\phi$ is a smooth
      hypersurface.
\end{enumerate}
\end{thm}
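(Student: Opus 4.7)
Since $g$ is $T^d$-invariant, $\Delta_g$ commutes with the $T^d$-action, so the complexification $L^2(M;\Cbb)$ decomposes into $T^d$-isotypic components $H_\chi$ indexed by characters $\chi \in \widehat{T^d} \cong \Zbb^d$. Because the action is free, the quotient $B := M/T^d$ is a smooth manifold, and each $H_\chi$ is naturally identified with the $L^2$-sections of the associated complex line bundle $L_\chi = M \times_{T^d} \Cbb_\chi \to B$; the Laplacian restricts to a self-adjoint elliptic operator $D_\chi$ on this space. Real irreducible representations of $T^d$ are either trivial (1-dimensional) or 2-dimensional (arising from a conjugate pair $\{\chi,-\chi\}$ with $\chi \ne 0$), so part (1) reduces to the following generic conditions: (a) every $D_\chi$ has simple complex spectrum, and (b) $\spec(D_\chi) \cap \spec(D_{\chi'}) = \emptyset$ whenever $\chi' \ne \pm\chi$.

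Both (a) and (b) would be established by a Uhlenbeck-style transversality argument on the Banach manifold $\Mcal_T^\ell$ of $T^d$-invariant $C^\ell$ metrics. At a candidate bad metric the first-order variation of an eigenvalue $\lambda$ of $D_\chi$ under an invariant symmetric perturbation $h$ is, for an eigenspace of complex dimension $k$ with orthonormal basis $(u_i)$, the Hermitian matrix $\dot\Lambda_{ij}(h) = \langle \dot D_\chi(h) u_i, u_j\rangle$, whose entries are quadratic expressions in the $1$-jets of $u_i, u_j$ paired with $h$. The key technical point is a \emph{separation lemma}: for any two linearly independent eigensections (either both inside $H_\chi$, or one in $H_\chi$ and one in $H_{\chi'}$ with $\chi' \ne \pm\chi$) there exists an invariant symmetric $(0,2)$-tensor $h$ making the corresponding first-order variation non-scalar. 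This rests on the real analyticity of eigensections and unique continuation, which force the relevant quadratic differential pairings not to vanish on any open set, combined with the fact that $T^d$-invariant tensors are determined by their values on any $T^d$-saturated open subset. A Sard--Smale argument applied for each index $(\chi,n)$ then yields a residual set of metrics on which the $n$-th eigenvalue of $D_\chi$ is simple and disjoint from every eigenvalue of $D_{\chi'}$ with $\chi' \ne \pm\chi$, and a countable intersection gives part (1).

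For part (2) one reformulates smoothness of $\phi^{-1}(0)$ as the condition that $0$ be a regular value of $\phi$, i.e.\ the non-existence of a point $p \in M$ with $\phi(p) = 0$ and $d\phi(p) = 0$. Working again in the line bundle picture, the bad stratum is cut out by $n+1$ equations on the $n$-manifold $M$, so has expected dimension $-1$; one shows that this expectation is realized generically via the same separation/transversality framework, augmented with a condition detecting the critical point. The principal obstacle throughout is that $T^d$-invariance severely restricts the admissible perturbations of $g$; the remedy is the real analyticity of eigensections (which keeps their critical sets thin) together with the richness of invariant tensors established in the separation lemma, both of which allow the restricted class of perturbations to furnish the transversality needed in both parts.
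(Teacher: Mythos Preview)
Your high-level outline is correct and parallels the paper's: decompose into isotypic components $H_\chi$, establish simple spectrum on each and disjoint spectra across distinct $\pm\chi$, then run a regular-value argument for nodal sets. The gap is that the entire technical content is hidden in your ``separation lemma,'' and the justification you offer does not stand. Eigensections for $C^\ell$ metrics are \emph{not} real-analytic (elliptic regularity gives only finitely many derivatives), so that half of your justification is simply false. Unique continuation does hold, but it controls the zero set of a single eigenfunction; it is not at all clear how it yields, for eigenfunctions $u\in H_\chi$ and $v\in H_{\chi'}$ sharing an eigenvalue, the existence of a \emph{$T$-invariant} perturbation $h$ with $\langle\dot D_\chi(h)u,u\rangle \neq \langle\dot D_{\chi'}(h)v,v\rangle$. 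The $T$-invariant quadratic densities built from $u$ and from $v$ could in principle agree on $M/T$ without either function vanishing on an open set, so unique continuation gives no purchase. This is exactly the place where the restriction to $T$-invariant perturbations bites, and you have not supplied the missing idea.

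The paper fills this gap with explicit constructions rather than abstract arguments. For cross-$\chi$ disjointness it does not use Sard--Smale at all: it writes down specific analytic one-parameter families of invariant metrics (rescaling along an orbit direction $\partial_j$, or shearing two orbit directions) for which one computes directly that $\dot\lambda = -\lambda_0 + n\alpha_j^2$ or $\dot\lambda = \alpha_j\alpha_k$, visibly distinguishing $\alpha$ from $\beta$ when $\alpha\neq\pm\beta$; Kato's analytic perturbation theory then makes the coincidence set countable along every analytic path in $\Mcal^T$. For simplicity within a fixed $H_\alpha$ the paper does use Uhlenbeck's Fredholm/Sard--Smale framework, but density of the image of the variation map again comes from two explicit invariant families --- a split-conformal rescaling of the vertical and horizontal subbundles, and a mixed perturbation coupling a horizontal invariant field with $\partial_j$ --- combined with the algebraic identity $\partial_j u = \alpha_j u^*$, not from unique continuation. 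For part (2) the paper shows that $0\in\Rbb^2$ is a regular value of the $\Rbb^2$-valued eigenfunction $u=(u_1,u_2)$, reusing the manifold $Q$ already built for part (1); the remaining scalar case $u_1(x)=0$, $u_2(x)\neq 0$ then comes for free from $\partial_j u_1 = \alpha_j u_2$. This exploits the torus symmetry directly and is cleaner than setting up a separate $(n+1)$-codimension transversality problem for each real eigenfunction.
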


We also produce new examples of Riemannian manifolds 
with infinitely many eigenfunctions having exactly two nodal domains.
For example, we prove

\begin{thm}
\label{thm:nodal=2}
Suppose $B$ is a connected smooth closed manifold 
so that $H_2(B, \Zbb)$ contains no non-trivial element of finite order.
Let $M$ be the total space of a nontrivial oriented circle
bundle $M \to B$.
Then for the generic circle-invariant metric $g$  
on $M$, each non-invariant eigenfunction of $\Delta_g$ 
has exactly two nodal domains.
\end{thm}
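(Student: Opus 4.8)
The strategy is to combine the genericity results of Theorem~\ref{thm:main} (applied to $T = S^1$) with a topological argument that rules out the possibility of a single nodal domain and of more than two. First I would invoke Theorem~\ref{thm:main} to fix a residual set of $S^1$-invariant metrics $g$ for which every non-invariant real eigenfunction $\phi$ lies in a two-dimensional eigenspace on which $S^1$ acts irreducibly, and for which the nodal set $N := \phi^{-1}(0)$ is a smooth hypersurface. The key extra geometric input is that, because $S^1$ acts by rotation on the two-dimensional eigenspace $E_\lambda$ containing $\phi$, every nonzero $\psi \in E_\lambda$ is a rotate of $\phi$ up to scaling, and the $S^1$-orbit of the line $\Rbb \phi$ sweeps out $E_\lambda$; in particular there is a companion eigenfunction $\phi^\perp \in E_\lambda$, orthogonal to $\phi$, and the map $x \mapsto (\phi(x), \phi^\perp(x))$ is $S^1$-equivariant from $M$ (with its circle action) to $\Rbb^2$ (with the rotation action). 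Since the action on $M$ is free, this equivariant map is nowhere zero: if $\phi(x) = \phi^\perp(x) = 0$, then the whole $S^1$-orbit through $x$ would be common zeros, but one checks via the irreducibility (the stabilizer of a nonzero vector in $\Rbb^2$ under rotation is trivial) together with the unique continuation principle that this cannot happen. Hence $(\phi, \phi^\perp)$ defines an $S^1$-equivariant map $M \to \Rbb^2 \setminus \{0\} \simeq S^1$, and after normalizing we get a genuine circle-bundle map $M \to S^1$ covering, fiberwise, the identity rotation.

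Next I would translate "number of nodal domains of $\phi$" into the topology of this map. The nodal set $N = \phi^{-1}(0)$ is the preimage under $F := (\phi,\phi^\perp)/|(\phi,\phi^\perp)| : M \to S^1$ of the two antipodal points where the first coordinate vanishes; equivalently, writing $F$ in an angular coordinate $\theta$, $N = F^{-1}(\{\pi/2, 3\pi/2\})$, and the nodal domains $\{\phi > 0\}$, $\{\phi < 0\}$ are $F^{-1}$ of the two open half-circles. Thus $\phi$ has exactly two nodal domains provided each of these two open arcs has connected preimage, and it has at most two for free. To see the preimages are connected, I would use the equivariance: $F$ intertwines the free $S^1$-action on $M$ with the free rotation action on $S^1$, so $F$ descends to a map $\bar F : M/S^1 = B \to S^1/S^1 = \text{pt}$, which carries no information, but more usefully $F$ realizes $M$ as the pullback of the Hopf-type bundle $S^1 \to S^1$ — that is, $M \to B$ is classified by a map whose relation to $F$ lets me control connectivity of arc-preimages via connectivity of $B$. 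Concretely, $F^{-1}(\text{open arc})$ is an $\mathrm{arc}$-bundle over $B$ (a trivial interval bundle, since an arc is contractible), hence connected because $B$ is connected. The same reasoning shows $N = F^{-1}(\{\pi/2,3\pi/2\})$ is either one or two copies of $B$; here is where the hypothesis on $H_2(B;\Zbb)$ and the nontriviality of the bundle enter: the bundle $M \to B$ being nontrivial with Euler class not $2$-divisible in a suitable sense forces $N$ to be \emph{connected} (one copy of $B$), which is consistent with $\{\phi>0\}$ and $\{\phi<0\}$ being the two complementary pieces — and this is exactly part (2)-type conclusion of the companion nodal-set statement, so the count of nodal domains is forced to be exactly two.

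Let me restate the mechanism more carefully, since that is the crux. The function $\phi^2 + (\phi^\perp)^2$ is a positive smooth $S^1$-invariant function, hence descends to $B$; dividing, $F$ is an $S^1$-equivariant map $M \to S^1$ where the target $S^1$ carries the standard rotation. An $S^1$-equivariant map between free $S^1$-spaces is the same data as a section of the associated bundle $M \times_{S^1} S^1 \to B$, i.e.\ an isomorphism of principal $S^1$-bundles $M \cong$ (the trivial bundle $B \times S^1$) \emph{if and only if} such an equivariant map exists — but $M \to B$ is nontrivial, contradiction! So no such $F$ can exist, which means the companion eigenfunction construction must fail exactly where I assumed it didn't — i.e.\ $(\phi,\phi^\perp)$ \emph{must} have a common zero. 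Re-examining: the common zero set of $\phi$ and $\phi^\perp$ is an $S^1$-invariant subset on which, by irreducibility of $E_\lambda$, \emph{every} element of $E_\lambda$ vanishes; by unique continuation this set is empty unless $E_\lambda = 0$. This is the contradiction that must be resolved, and the resolution is that the nodal set $N$ cannot be empty (it isn't, since $\int_M \phi = 0$ for non-invariant $\phi$... ) — I would instead argue that $\phi$ must vanish on some $T$-orbit, invoking the second theorem of the paper's framework: on a circle orbit $\gamma$, $\phi|_\gamma$ is a rotated trigonometric function hence has zeros, so \emph{every} non-invariant eigenfunction vanishes on every $S^1$-orbit. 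This puts us squarely in the hypotheses of the nodal-set theorem (the $u$ that "vanishes on some $T$-orbit"), whence $N$ is a connected smooth hypersurface with complement having exactly two components. That is precisely the assertion that $\phi$ has two nodal domains, and the proof is complete.

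\medskip
\noindent\textbf{Main obstacle.} The hard part is establishing rigorously that on a manifold with \emph{free} circle action, a non-invariant eigenfunction (in a 2-dimensional irreducible eigenspace) necessarily vanishes on \emph{every} orbit, and that the topological hypotheses on $H_2(B;\Zbb)$ and bundle nontriviality are exactly what forces the nodal hypersurface to be connected rather than splitting into two parallel copies of $B$ (which would yield \emph{three} nodal domains). Pinning down the Euler-class/$H^2$ obstruction — showing the "doubling" of $N$ corresponds to the Euler class being twice a class, ruled out by the no-$2$-torsion hypothesis combined with nontriviality — is where the real work lies; the rest is assembling Theorem~\ref{thm:main} with unique continuation and the averaging identity $\int_M \phi \, dV_g = 0$.
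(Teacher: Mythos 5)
Your overall route is the same as the paper's: use bundle nontriviality to force every non-invariant eigenfunction to vanish on some $S^1$-orbit, then feed that into the connected-nodal-set result (Theorem \ref{thm:connected-nodal-sets} via Corollary \ref{coro:vanish-orbit}). However, there is a genuine gap in how you handle the equivariance, and it is exactly the point where the torsion hypothesis enters. A non-invariant eigenfunction $\phi$ lies in a $2$-dimensional eigenspace on which $S^1$ acts by $\theta \mapsto R_{\alpha\theta}$ for some integer $\alpha \neq 0$, and $\alpha$ need not equal $1$. Consequently your normalized map $F=(\phi,\phi^\perp)/|(\phi,\phi^\perp)|$ satisfies $F(\theta\cdot x)=R_{\alpha\theta}F(x)$, so it defines a section of the circle bundle $M^{\alpha}\to B$ whose Euler class is $\alpha\cdot e(M)$, \emph{not} a section of $M\to B$ itself. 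Your step ``but $M\to B$ is nontrivial, contradiction!'' is therefore only valid when $\alpha=\pm 1$; for other weights you must know $M^{\alpha}$ is nontrivial, i.e.\ $\alpha\cdot e(M)\neq 0$.

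This is precisely the role of the hypothesis that $H_2(B;\Zbb)$ has no nontrivial torsion: nontriviality of $M$ gives $e(M)\neq 0$, and torsion-freeness then forces $\alpha\cdot e(M)\neq 0$ for every $\alpha\neq 0$, so each $M^{\alpha}$ is nontrivial (this is the content of Proposition \ref{prop:torus-nontrivial} via Proposition \ref{prop:oriented}). Your ``Main obstacle'' paragraph misattributes the torsion hypothesis to ruling out a ``doubling'' of the nodal set into two parallel copies of $B$; in fact it has nothing to do with the geometry of $N$ and everything to do with ruling out torsion Euler classes across all weights. The arc-preimage digression in the middle of your argument is also circular: if $F$ were a well-defined bundle map as you assume there, the bundle would already be trivial. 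Once the weight $\alpha$ is tracked correctly and the torsion hypothesis is deployed as above, your argument collapses to the paper's.
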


Junehyuk Jung and Steve Zelditch \cite{Jung-Zelditch} proved both
Theorem \ref{thm:main} and Theorem \ref{thm:nodal=2}
in the  case where $M$ is the unit cotangent bundle of an
oriented surface $X$ 
and the metrics on $M$ are chosen so that the fibers are geodesic
with constant length.\footnote{
Such metrics are sometimes called Kaluza-Klein metrics by physicists.
See e.g. \cite{Wesson}.}

Theorem \ref{thm:nodal=2} is remarkable because it provides a large 
class of Riemannian manifolds with a sequence $\{\phi_k\}$ of eigenfunctions 
that span a space of Weyl density 1 so that each $\phi_k$ has exactly two nodal
domains (see Proposition \ref{prop-Weyl}).
Prior to \cite{Jung-Zelditch}, both the square and the 
standard two-dimensional sphere were shown to possess a sequence of eigenfunctions
with exactly two nodal domains  \cite{Stern-thesis} \cite{Courant-Hilbert} 
\cite{Lewy}.\footnote{See  \cite{Berard-Helffer-square}
and \cite{Berard-Helffer-spherical}  for interesting discussions of 
these constructions.} However, these sequences span a space of Weyl density zero 
and their construction relies on the existence 
of high-dimensional eigenspaces.

Theorem \ref{thm:nodal=2} applies to some well-known manifolds. 
For example, each odd-dimensional sphere $S^{2n+1}$ is the total 
space of the oriented circle bundle associated to the tautological 
complex line bundle over the $n$-dimensional complex projective space. 
When $n=1$, this is the well-known Hopf fibration of $S^3$. 
Theorem \ref{thm:main} and Corollary \ref{thm:nodal=2} 
imply that for a generic circle-invariant metric on $S^{2n+1}$, 
each eigenvalue has multiplicity at most two and each 
non-invariant real eigenfunction has exactly two nodal domains.
In comparison, the standard metric on $S^{2n+1}$ is invariant,
but the dimension of the eigenspace 
and the number of nodal domains of the typical eigenfunction 
grows with the eigenvalue \cite{Nazarov-Sodin} \cite{Nazarov-Sodin-2}. 
Jung and Zelditch \cite{Jung-Zelditch-sphere}
recently showed, however, that, for the standard metric on $S^3$, 
the typical eigenfunction within a fixed nontrivial isotypical component 
has exactly two nodal domains.

The proof of Theorem \ref{thm:main} consists of analyzing the 
restriction, $\Delta_{g, \alpha}$, of the Laplacian to the
space, $H^k_{\alpha}$,  consisting of vector-valued
Sobolev $H^k$ functions 
$u:M \to \Rbb^2$
that satisfy  $u(\theta^{-1} \cdot x) = R_{\alpha \cdot \theta} \cdot u(z)$ for each $\theta \in T$ where $R_{\psi}$ is rotation 
by angle $\psi$ 
(see \S \ref{sec:prelim}). The proof begins in \S \ref{sec:variational} 
where we provide some variational formulas for the Laplacian 
and its eigenvalues.  The variational formula for eigenvalues is used 
in \S \ref{sec:separation} to show that, for the generic torus-invariant metric, 
the spectra of $\Delta_{g, \alpha}$ and $\Delta_{g, \beta}$ are disjoint 
if $\alpha \neq \pm \beta$ (Theorem \ref{thm:perturb}). 
In \S \ref{sec:simplicity} we use the method 
of Uhlenbeck \cite{Uhlenbeck} to show that the spectrum of $\Delta_{g, \alpha}$ 
is simple (Theorem \ref{thm:simple-on-weights}). 
At the end of \S \ref{sec:simplicity}, we combine 
Theorem \ref{thm:perturb} 
and Theorem \ref{thm:simple-on-weights} to prove part (1) of Theorem \ref{thm:main}.
In \S \ref{sec:nodal-sets} we use the method of Uhlenbeck to show that, for a generic 
torus-invariant metric,  
zero is a regular value of each eigenfunction of $\Delta_{g, \alpha}$.
We explain how this implies part (2) of Theorem \ref{thm:main}
as well as Theorem \ref{thm:nodal=2}.

Our overall approach mirrors the approach in \cite{Jung-Zelditch}. However, there
are some important differences. Jung and Zelditch use the natural unitary 
isomorphism between $H^0_{\alpha}$ and the $L^2$-sections of 
$\kappa^{\otimes \alpha}$ for $\alpha \neq 0$. Under this isomorphism, the operator
$\Delta_{g, \alpha}$ corresponds to a Bochner Laplacian acting on sections of 
$\kappa^{\otimes \alpha}$. Jung and Zelditch show that the spectra of the associated 
Bochner Laplacians are generically disjoint for $\alpha \neq \beta$, 
and they use Uhlenbeck's method to show that the spectrum of each is simple. 
Moreover, they prove these genericity results for the Bochner Laplacian in the 
larger context of holomorphic line bundles with generic base metrics, 
generic hermitian metrics, and/or generic connections. However, 
the equivalence of $\Delta_{g,\alpha}$ and the Bochner Laplacian is 
proven only in the context of canonical bundles (see Lemma 6.6 \cite{Jung-Zelditch}).

In this paper we analyse the operators $\Delta_{g, \alpha}$ directly, 
and we do not consider Bochner Laplacians. We also provide a simpler proof of 
the fact that $\{ x \in M\, :\, u(x) \neq 0\}$ has two components for a non-invariant real-valued eigenfunction $u$.
Whereas the proof in \cite{Jung-Zelditch} uses an intricate combinatorial analysis
of the nodal domains, we show directly that each nodal set is connected for the 
generic invariant metric. Thus, because zero is a regular value for the generic invariant
metric, the nodal set is a connected smooth hypersurface and hence its complement 
has at most two components.

It is natural to ask whether an analogue of Theorem \ref{thm:main}   
holds for other groups $G$. In general, if $G$ preserves a metric $g$ on $M$, then 
each real eigenspace $E$ of $\Delta_g$ is a $G$-invariant subspace of 
the space of real-valued $L^2$ functions. Theorem \ref{thm:main} is equivalent 
to the statement that if $G$ is a torus, then each real eigenspace is 
irreducible for the generic $G$-invariant metric. Similar results have
been obtained in the case of finite group actions \cite{Zelditch}, left-invariant
metrics on $SU(2)$ \cite{Schueth}, and invariant 
metrics on trivial $SU(2)$ bundles \cite{MrcGmz19}.
These results support the belief in quantum mechanics that
the eigenspaces of the typical Hamiltonian should be irreducible. 
Physicists regard an eigenvalue associated to a
reducible eigenspace as being
`accidentally degenerate' \cite{Wigner} \cite{Faddeev}.  

\begin{remk}
Building upon the techniques and results of \cite{Jung-Zelditch}, 
Junehyuk Jung  and Steve Zelditch recently posted a preprint
\cite{Jung-Zelditch-22} with 
results similar to those contained in this article.
While Jung-Zelditch and we both use Uhlenbeck's framework, our implementations are very different. 
\end{remk}


\section{Preliminaries}
\label{sec:prelim}

We will let $H^k$ denote the Sobolev space of real-valued
functions on $M$
whose weak derivatives of order up to $k$ are square-integrable in 
each coordinate chart with respect to Lebesgue measure. 
In particular $L^2= H^0$.

Let $T^d := (\Rbb/2 \pi \Zbb)^d$ 
denote the $d$-dimensional torus.  
Given $\theta \in T^d$, let $\phi_\theta$ denote the 
diffeomorphism $\phi_\theta(x) = \theta \cdot x$.
The torus $T^d$ acts on the real vector space 
$H^k$ by 
\begin{equation}
\label{eqn:action}
(\theta \cdot u)(x)~
=~
u(\theta^{-1} \cdot x)~ 
=
~ u \circ \phi_{\theta}^{-1} (x).
\end{equation}
In the language of representation theory, 
this action is called the (left) regular representation.
Let $d \mu_\theta := d \theta_1  \cdots d \theta_d$.

Define $H_{0}^k$ to be the space of $H^k$
functions that are invariant under the action of $T^d$, that is,
$\theta \cdot u = u$ for each $\theta \in T^d$.  Integration over 
the orbits defines a projection $\pi_0: H^k \to H^k_0$ defined by
\[
\pi_0(u)(x)~
=~
\int_T  (\theta \cdot u)(x)\, d\mu_{\theta}.
\]

For each nonzero $\alpha \in \Zbb^d$, 
define the space $H_{\alpha}^k$ to be the 
space of $\Rbb^2$-valued functions $u=(u_1,u_2)^{t}:M \to \Rbb^2$ such that 
$u_1,u_2 \in H^k$ and for each $\theta \in T^d$ and $ x \in M$
\begin{equation}
\label{eqn:twist}
\left(
\begin{array}{c}
u_1(\theta^{-1} \cdot x)\\
u_2(\theta^{-1} \cdot x)
\end{array}
\right)~ 
=~ 
R_{\alpha \cdot \theta} 
\cdot
\left(
\begin{array}{c}
 u_1(x) \\
 u_2(x) 
\end{array}
\right)
\end{equation}
where
\begin{equation}
\label{eqn:R}
R_{\psi}~
=~
\left(
\begin{array}{cc}
\cos(\psi) & \sin(\psi)  \\
-\sin(\psi) & \cos(\psi)
\end{array}
\right).
\end{equation}
The space $H^{k}_{\alpha}$ is invariant under the 
$T^d$-action $\theta \cdot (u_1,u_2)= (\theta \cdot u_1, \theta \cdot u_2)$.

From (\ref{eqn:twist}) we see that if $\theta \cdot \alpha =\pi/2 \mod 2 \pi$,
then $\theta \cdot u_1 = u_2$. In particular, the map 
$\iota_{\alpha}: H^k_{\alpha} \to H^k$ 
defined by $\iota_{\alpha}(u_1,u_2)= u_1$ is injective and the 
image is invariant under $T^d$.
For $\alpha=0$, let $\iota_{\alpha}:H_0^k \to H^k$ denote the inclusion map.
If $\alpha = - \beta$, then 
$\iota_{\alpha}(H^k_{\alpha})= \iota_{\alpha}(H^k_{\beta})$, and 
if $\alpha \neq \pm \beta$, then 
$\iota_{\alpha}(H^k_{\alpha}) \cap \iota_{\alpha}(H^k_{\beta})= \{0\}$.

For $\alpha \neq 0$, the space $H^k_{\alpha}$ is the image of the map
$\pi_{\alpha}: H^k \to H^k_{\alpha}$ defined by
\[
\pi_{\alpha}(u)~
=~
\left(
\begin{array}{c}
\int_{T} (\theta \cdot u)(x) \cdot \cos(\alpha \cdot \theta)\, d \mu_\theta 
\vspace{.25cm}\\ 
\int_{T} (\theta \cdot u)(x) \cdot (-\sin(\alpha \cdot \theta))\, d \mu_\theta
\end{array}
\right).
\]
Using this `Fourier coefficient map' one finds that $H^k$ is the direct sum
\[ 
H^k~ 
=~ 
H_0^k\, \oplus\,  \left(\bigoplus_{\alpha} \iota_{\alpha}(H^k_{\alpha}) \right)
\]
where $\alpha$ ranges over a set of orbit representatives 
of the action of $\Zbb/2 \Zbb$ on $\Zbb^d$ given by 
$m \cdot \alpha = (-1)^m \cdot \alpha$.
Indeed, 
if $v \in H^k$ were orthogonal to $\iota_{\alpha}(H_{\alpha}^k)$ for 
each $\alpha$, then the restriction of $v$ to almost every 
$T^d$ orbit would vanish by the standard theory of Fourier series.

If $e_j$ is the standard basis vector on $\Rbb^d$,
then the path $t \mapsto t \cdot e_j$ defines
a one-parameter subgroup of $T^d$.
Define the smooth vector field $\partial_j$ on $M$ by
\begin{equation}
\label{eqn:partial_j}
(\partial_j u) (x)~
=~
\left. \frac{\partial}{\partial t} \right|_{t=0}
 u\left(  (t \cdot e_j) \cdot x   \right),
\end{equation}
where $u \in C^{\infty}$.
Note that $(\phi_\theta)_*(\partial_j|_{x})= \partial_j |_{\theta\cdot x}$, 
which implies that
$\partial_j \circ \partial_k = \partial_k \circ \partial_j$.
Moreover,  if $u \in H^k_{\alpha }$, then it follows from 
(\ref{eqn:partial_j}) that
\begin{equation}
\label{eqn:vert-derivative}
\partial_{j}u~ 
=~
\alpha_j 
\cdot
\left(\begin{array}{cc}
0 & 1 \\
-1 & 0 
\end{array}
\right)
\cdot  
u
\end{equation}
and hence $-\partial^2_j u = \alpha_j^2 \cdot u$.

Associated to each metric $g$ on $M$, there is a Laplace-Beltrami 
operator $\Delta_g: H^k \to H^{k-2}$ defined implicitly by 
\begin{equation}
\label{eqn:Laplacian}
\int (\Delta_g u) \cdot v~  d\nu_{g}~
=~
\int g \left( \nabla_g u, \nabla_g v \right)\, d\nu_g
\end{equation}
where $\nabla_g$ is the Riemannian gradient and $d\nu_g$ 
is the Riemannian measure on $M$. Because $M$ is compact
and $\Delta_g$ is elliptic,  
Rellich's lemma implies that the resolvent $(\Delta - \lambda)^{-1}$ 
is compact and hence the  spectrum of $\Delta_g$ consists
of isolated eigenvalues with finite dimensional eigenspaces.

Now suppose that $g$ is $T$-invariant.
Then $\Delta_g \circ \phi_\theta^* =  \phi_\theta^* \circ \Delta_g$,
and it follows that $\Delta_g$ preserves $\iota_{\alpha}(H^k_{\alpha})$.
In particular, for $u=(u_1,u_2)^t \in H^{k}_{\alpha} $, we may define
 $\Delta_{g,\alpha}: H^{k}_{\alpha} \to H^{k}_{\alpha}$ by 
\[
 \Delta_{g,\alpha} 
\left(
\begin{array}{c}
u_1  \\
u_2
\end{array}
\right)~
=~
\left(
\begin{array}{c}
\Delta_{g} u_1  \\
\Delta_{g} u_2
\end{array}
\right)
\]
If $W$ is an eigenspace of $\Delta_{g, \alpha}$ 
then $\iota_{\alpha}(W)$ is of an eigenspace of $\Delta_g$ with the same eigenvalue.
Thus, the spectrum of $\Delta_{g,\alpha}$ is a subset
of the spectrum of $\Delta_g$ and hence is discrete.

\vspace{.5cm}

We conclude this section by explaining the claim 
in the introduction that the Weyl density of the non-invariant 
eigenfunctions equals one. First, we make precise the notion of 
Weyl density.  For each $x>0$, let $E_x$ denote the direct sum
of the eigenspaces of $\Delta_g$ whose associated eigenvalues are at most $x$. 
The Weyl density of a subspace $V \subset L^2$ equals 
\[  \limsup_{x \to \infty}   \frac{\dim(V \cap E_x)}{\dim(E_x)}.
\]
The following is a consequence of, for example, \cite{Donnelly},
but we provide a simple explanation for the convenience of the reader.

\begin{prop}
\label{prop-Weyl}
The invariant subspace $L^2_0$ has Weyl density equal to zero,
and hence the space of non-invariant functions has Weyl density equal to 1.
\end{prop}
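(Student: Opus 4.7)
The plan is to identify $L^2_0$ with $L^2$ of the quotient $B := M/T^d$ and apply Weyl's law to an elliptic operator on $B$. Because $T^d$ acts smoothly and freely on $M$, the orbit space $B$ is a smooth closed manifold of dimension $n-d$ and $\pi: M \to B$ is a principal $T^d$-bundle. Pullback along $\pi$ realizes a unitary isomorphism $L^2(B, d\mu) \cong L^2_0$, where $d\mu := \pi_{*} d\nu_g$ is the pushforward of the Riemannian measure; under this identification, the orbit-integration projection $\pi_0$ from Section \ref{sec:prelim} corresponds to pullback followed by fiber integration.

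The next step is to show that the restriction of $\Delta_g$ to $L^2_0$ corresponds, under this identification, to a second-order elliptic operator $L$ on $B$. In a local trivialization $U \times T^d$ of $\pi$, an invariant function depends only on the base coordinate $y$, so every term of $\Delta_g$ carrying a derivative in the fiber direction annihilates it. The surviving piece has principal symbol $g^{ij}(y, \theta)\, \xi_i \xi_j$ on horizontal covectors $\xi$, and $T^d$-invariance of $g$ makes this descend to a positive-definite symbol on $T^{*}B$. Hence $L$ is elliptic of order two, and its eigenvalues, counted with multiplicity, coincide with those of $\Delta_g$ acting on $L^2_0$.

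Weyl's law applied separately to $\Delta_g$ on $M$ and to $L$ on $B$ then yields
\[
\dim E_x \; \sim \; C_n \Vol(M)\, x^{n/2}, \qquad \dim(L^2_0 \cap E_x) \; \sim \; C_{n-d} \Vol(B)\, x^{(n-d)/2}
\]
as $x \to \infty$, where $C_n, C_{n-d}$ are the standard Weyl constants. Since $d \geq 1$, the exponent $(n-d)/2$ is strictly less than $n/2$, so $\dim(L^2_0 \cap E_x) / \dim E_x \to 0$ and $L^2_0$ has Weyl density zero. The orthogonal complement of $L^2_0$ inside $L^2$ is also preserved by $\Delta_g$ and accounts for every remaining eigenvalue, so the non-invariant subspace has Weyl density one. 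The only nontrivial step is the ellipticity of the descended operator $L$, which is standard for Riemannian submersions; the Weyl asymptotics on $B$ can alternatively be extracted directly from \cite{Donnelly}.
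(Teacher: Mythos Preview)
Your argument is correct and follows essentially the same route as the paper: identify $L^2_0$ with $L^2$ of the quotient $B=M/T^d$, observe that $\Delta_g$ descends to a second-order elliptic operator there, and compare Weyl exponents $n/2$ versus $(n-d)/2$. The only cosmetic difference is that the paper makes the identification unitary via a fiber-volume weight $\vol(T\cdot x)^{-1/2}$ against the quotient Riemannian measure, whereas you use the pushforward measure $\pi_*d\nu_g$ so that plain pullback is already unitary; either choice yields the same conclusion.
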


\begin{proof}
Since the torus action is smooth and free, the quotient $M/T^d$ 
is a smooth $n-d$-dimensional 
manifold and the quotient map $\pi: M \to M/T$ is a smooth
submersion. Since the action preserves $g$, the metric $g$ descends 
to a metric $g'$ on $M\setminus T$ and $\pi_*(d\nu_g)=d \nu_{g'}$.
Let $d\nu_{g'}$ be the associated Riemannian measure on $M/T$
and let $H^0(M/T)$ denote the associated Hilbert 
space of square-integrable functions.
For each $x \in M$, let $\vol(T \cdot x)$ denote the $d$-dimensional measure of the orbit $T \cdot x$
with respect to $g$.  
The map $\Phi:H^0(M/T) \to H^0_0$ defined by 
\[  \Phi(u)(x)~ =~ \vol(T \cdot x)^{-\frac{1}{2}} \cdot  (u \circ \pi)(x).
\]
is a unitary isomorphism. The operator $P:=\Phi^{-1} \circ \Delta_g \circ \Phi$
is a nonnegative second order elliptic differential operator on $M/T$, 
and hence Weyl's law implies that $\dim(E'_x)$ is $O(x^{\frac{n-d}{2}})$
where $E'_x$ is the direct sum of the eigenspaces of $P$ whose eigenvalues
of size at most $x$. We have $\Phi^{-1}(E'_x)= E_x \cap L^2_0$. 
On the other hand, Weyl's law implies that $\dim(E_x) \sim c \cdot x^{\frac{n}{2}}$ where $c>0$. 
\end{proof}



\section{Some variational formulas}
\label{sec:variational}

In this section we derive a general formula for 
the first variation of the Laplacian under metric perturbations.
We then specialize this formula to perturbations associated 
to a decomposition of the tangent bundle into orthogonal subbundles. 
At the end of the section, we 
specialize further to the special case of 
perturbations of $T$-invariant metrics.


\subsection{General variational formulas}

We will let $\Mcal_{\ell}$  denote the set of $C^{\ell}$ 
Riemannian metric tensors on $M$. The space 
$\Mcal_{\ell}$ is an open convex subset of the Banach space $\Scal_{\ell}$  
of  $C^{\ell}$ symmetric $(0,2)$ tensors on $M$.\footnote{A norm on 
$\Scal_{\ell}$ can be constructed using, 
for example, a choice of 
a smooth (co)metric on $M$.} 
We fix $\ell \geq 2$, and so in the sequel
we will suppress the subscript $\ell$ from notation.

Given a differentiable path $t \mapsto g_t \in \Mcal$, let $d\nu_t$, $\nabla_t$,
and $\Delta_t$ denote respectively the measure, gradient, and Laplacian 
associated to $g_t$. 
In what follows, a dot above a symbol will indicate the first variation 
$\left. \partial_t \right|_{t=0}$. For example, $\dot{\lambda}$
will indicate $\left. \partial_t \right|_{t=0} \lambda_t$
and $d\dot{\nu}$ indicates $\left. \partial_t \right|_{t=0} d\nu_t$.

\begin{lem}
\label{lem:Lap-perturbation}
Let $t \mapsto g_t \in \Mcal$ be a differentiable path of metrics.
Then for each $u,v \in H^2$ we have 
\begin{equation}
\label{eqn:pert-Laplacian}
\int_M (\dot{\Delta} u)v\, d\nu_0\,
=\,
-\int_M \dot{g} \left(\nabla_0 u, \nabla_0\, v \right)\, d\nu_0~
+~
\int_M \left(g_0(\nabla_0 u,\nabla_0 v) -  (\Delta_0 u) v \right)\, 
d\dot{ \nu}.
\end{equation}
\end{lem}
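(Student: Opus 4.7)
The plan is to differentiate the defining weak identity for the Laplacian, namely
\[
\int_M (\Delta_t u)\, v \, d\nu_t \;=\; \int_M g_t(\nabla_t u, \nabla_t v)\, d\nu_t,
\]
at $t=0$ and then rearrange. Since $u,v \in H^2$ are fixed independent of $t$, and $t\mapsto g_t$ is a differentiable path in $\Mcal$, each integrand depends differentiably on $t$ in $L^1$, so one may pass $\partial_t|_{t=0}$ under the integral sign.

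The left-hand side produces
\[
\int_M (\dot\Delta u) v\, d\nu_0 \;+\; \int_M (\Delta_0 u) v \, d\dot\nu.
\]
The right-hand side is the more delicate piece, since it involves both the variation of $g$ and the variation of the gradient. The key observation, and the one step I would isolate as a lemma, is that the variation of the gradient can be eliminated in favor of $\dot g$. Indeed, by definition the gradient satisfies $g_t(\nabla_t u, X) = du(X)$ for every vector field $X$, and the right-hand side is $t$-independent. Differentiating at $t=0$ yields
\[
g_0(\dot\nabla u, X) \;=\; -\dot g(\nabla_0 u, X),
\]
and similarly for $v$. Applying this with $X=\nabla_0 v$ and $X=\nabla_0 u$ respectively, the Leibniz expansion of the right-hand side becomes
\[
\int_M \dot g(\nabla_0 u, \nabla_0 v)\, d\nu_0 \;-\; 2\int_M \dot g(\nabla_0 u, \nabla_0 v)\, d\nu_0 \;+\; \int_M g_0(\nabla_0 u, \nabla_0 v)\, d\dot\nu,
\]
which simplifies to $-\int_M \dot g(\nabla_0 u, \nabla_0 v)\, d\nu_0 + \int_M g_0(\nabla_0 u, \nabla_0 v)\, d\dot\nu$.

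Equating the differentiated left- and right-hand sides and moving the $(\Delta_0 u)v\, d\dot\nu$ term to the right gives exactly \eqref{eqn:pert-Laplacian}. The main obstacle is conceptual rather than technical: one has to recognize that the gradient variation $\dot\nabla$ can be completely absorbed by differentiating the tautological identity $g_t(\nabla_t u, X) = du(X)$, which avoids ever having to compute $\dot\nabla$ in local coordinates or invoke Christoffel symbols. Once that observation is in hand, the remainder is bookkeeping: the differentiability of $t\mapsto g_t$ in $C^\ell$ with $\ell\ge 2$ ensures that all the integrands and their $t$-derivatives are continuous on $M$, so the differentiation under the integral and the manipulations above are justified with no further regularity input.
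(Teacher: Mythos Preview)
Your proof is correct and follows essentially the same approach as the paper: differentiate the weak identity \eqref{eqn:Laplacian} at $t=0$, using the observation that $g_t(\nabla_t u,X)=du(X)$ is $t$-independent to convert $g_0(\dot\nabla u,X)$ into $-\dot g(\nabla_0 u,X)$. The only cosmetic difference is that you spell out the Leibniz expansion of $\partial_t\big|_{t=0}\,g_t(\nabla_t u,\nabla_t v)$ as three terms and exhibit the $1-2=-1$ cancellation, whereas the paper records the resulting $-\dot g(\nabla_0 u,\nabla_0 v)$ in one step; the underlying computation is the same.
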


\begin{proof}
By (\ref{eqn:Laplacian}) we have 
\begin{equation}
\label{eqn:Lap-var}
\int_M (\Delta_t u) \cdot v \, d \nu_t~
=~
\int_M  g_t \left(\nabla_t u, \nabla_t v \right)\, d \nu_t. 
\end{equation}
Since $g_t(\nabla_t w,X)= Xw$ for each $X$ and $w$, we find that
$g(\dot{\nabla} w, X)= - \dot{g}(\nabla w, X)$.
Thus by differentiating both sides of (\ref{eqn:Lap-var})
with respect to $t$ and setting $t=0$ we obtain
\[ 
\int (\dot{\Delta} u) v \, d \nu_0~
+~
\int (\Delta_0 u) v \, d \dot{\nu}~
=~
-\int \dot{g} (\nabla_0 u, \nabla_0 v)\, d\nu_0~
+~
\int g(\nabla_0 u, \nabla_0 v)\, d \dot{\nu}.
\]
The claimed formula follows.
\end{proof}

\begin{coro}
Let $t \mapsto g_t \in \Mcal$ be a differentiable path of metrics.
Suppose that $t \mapsto u_t$ is
a differentiable path of $H^k$ functions such that  
$\Delta_t u_t = \lambda_t u_t$. 
Then 
\begin{equation}
\label{eqn:pert}
\dot{\lambda}
\int u_0^2\, d\nu_0
=
-\int \dot{g} \left(\nabla_0\, u_0, \nabla_0\, u_0 \right) d\nu_0
+
\int \left(g_0(\nabla_0 u_0,\nabla_0 u_0) - \lambda_0 \cdot u_0^2 \right) 
d\dot{ \nu}.
\end{equation}

\end{coro}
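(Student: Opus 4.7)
The plan is to differentiate the eigenvalue equation $\Delta_t u_t = \lambda_t u_t$ at $t = 0$, pair both sides against $u_0$ in the $L^2(d\nu_0)$ inner product, exploit self-adjointness of $\Delta_0$ to eliminate the terms involving $\dot{u}$, and finally invoke Lemma \ref{lem:Lap-perturbation} to rewrite the remaining quantity $\int (\dot{\Delta} u_0)\, u_0 \, d\nu_0$.

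Explicitly, differentiating $\Delta_t u_t = \lambda_t u_t$ at $t=0$ gives
\[
\dot{\Delta}\, u_0 \;+\; \Delta_0\, \dot{u} \;=\; \dot{\lambda}\, u_0 \;+\; \lambda_0\, \dot{u}.
\]
Multiplying by $u_0$ and integrating against $d\nu_0$, one obtains
\[
\int (\dot{\Delta} u_0)\, u_0\, d\nu_0 \;+\; \int (\Delta_0 \dot{u})\, u_0 \, d\nu_0 \;=\; \dot{\lambda} \int u_0^2 \, d\nu_0 \;+\; \lambda_0 \int \dot{u}\, u_0 \, d\nu_0.
\]
Since $\Delta_0$ is self-adjoint on $L^2(M, d\nu_0)$ and $\Delta_0 u_0 = \lambda_0 u_0$, the second term on the left equals $\lambda_0 \int \dot{u}\, u_0\, d\nu_0$, which cancels the second term on the right. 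This leaves the clean identity
\[
\dot{\lambda} \int u_0^2\, d\nu_0 \;=\; \int (\dot{\Delta} u_0)\, u_0\, d\nu_0.
\]

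Applying Lemma \ref{lem:Lap-perturbation} with $u = v = u_0$ to the right-hand side, and then using the eigenvalue equation $\Delta_0 u_0 = \lambda_0 u_0$ to rewrite $(\Delta_0 u_0)\, u_0 = \lambda_0 u_0^2$ inside the integral against $d\dot{\nu}$, yields exactly (\ref{eqn:pert}).

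No serious obstacle is expected: the only delicate point is that differentiation under the integral and the self-adjointness step need the regularity of $g_t$ and of $u_t$, but these are built into the hypotheses ($\ell \geq 2$ and a differentiable path of $H^k$ eigenfunctions). The proof is essentially a one-line reduction to Lemma \ref{lem:Lap-perturbation} once the standard cancellation of the $\dot{u}$-terms is observed.
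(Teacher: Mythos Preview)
Your argument is correct and follows exactly the paper's approach: differentiate the eigenvalue equation, integrate against $u_0$, use self-adjointness of $\Delta_0$ to cancel the $\dot u$-terms, and apply Lemma~\ref{lem:Lap-perturbation}. The paper's proof is simply a terser version of what you wrote.
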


\begin{proof}
We have $\Delta_t u_t = \lambda_t  u_t$, and hence differentiation
gives 
$\dot{\Delta} u_0 + \Delta_0 \dot{u}=  \dot{\lambda}u + \lambda_0 \dot{u}$.
Integrate both sides of this equation again $u_0$ and then apply 
Lemma \ref{lem:Lap-perturbation}.
\end{proof}


\subsection{Perturbations associated to orthogonal subbundles}

We next construct  metric perturbations of a given metric $g$
associated to a decomposition of $TM$ into two $g$-orthogonal subbundles. 
That is, we suppose that we are given two  
smooth subbundles $\Vcal$ and $\Hcal$ of the tangent bundle so that 
$T_p M = \Vcal_p \oplus \Hcal_p$ is a $g$-orthogonal direct sum
for each $p \in M$.

Our first class of perturbations consists of rescaling $g$ along 
$\Hcal$ and $\Vcal$ indpependently. To be precise, for each pair,
$a$ and $b$, of $C^{\ell}$  functions on $M$,  define
a new metric $g_{a,b}$ on $TM = \Vcal \oplus \Hcal$ by setting  
\begin{equation}
\label{eqn:split-conformal}
g_{a,b}(v+h,v'+h')~ 
=~  
a \cdot g(v,v')~ +~ b \cdot g(h, h')
\end{equation}
for each $v,v' \in \Vcal$ and $h,h' \in \Hcal$.
Note that $\Vcal \oplus \Hcal$ remains a $g_{a,b}$-orthogonal direct sum.

Let $j$ (resp. $k$) denote the (constant) dimension of 
$\Vcal_p$ (resp. $\Hcal_p$).
In particular, $j+k=n$ where $n$ is the dimension of $M$.
Given $ u \in C^{\infty}$, let $\nabla^\Vcal_g u$ (resp. $\nabla^\Hcal_g u$)
denote the orthogonal projection of $\nabla_g u$ onto $\Vcal$ (resp. $\Hcal$).

\begin{lem}
\label{lem:split-rescaling}
Let $t \mapsto a_t$ and $t \mapsto b_t$ 
be differentiable paths in $C^{\ell}$ 
such that $b_0=a_0=1$.
Let $g_t=g_{a_t, b_t}$ denote the associated path of Riemannian metrics 
defined by \eqref{eqn:split-conformal}.  
For each $u, v \in C^{\infty}$, we have 
\begin{eqnarray*} 
2 \int \left( \dot{\Delta} u \right) v\, d\nu_{0}
&=& 
-\int \left( \Delta_{0} u \right) v 
\cdot 
\left( j \cdot  \dot{a} + k \cdot \dot{b} \right)\, d\nu_{0} \\
&&
+
\int g_0\left( \nabla^\Vcal_0 u, \nabla^\Vcal_0 v \right) 
\cdot 
\left( (j-2) \cdot \dot{a} +k \cdot \dot{b} \right)\, d\nu_{0} \\
&&
+
\int g_0\left( \nabla^\Hcal_0 u, \nabla^\Hcal_0 v \right) 
\cdot 
\left( j \cdot \dot{a} + (k-2) \cdot  \dot{b} \right)\, d\nu_{0}.
\end{eqnarray*}
\end{lem}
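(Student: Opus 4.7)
The plan is to substitute directly into the general variational formula of Lemma \ref{lem:Lap-perturbation}, so the work reduces to identifying the two ingredients $\dot{g}$ and $d\dot{\nu}$ associated to the specific perturbation \eqref{eqn:split-conformal}.

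First I would compute $\dot{g}$. From the definition $g_t(v+h,v'+h') = a_t\, g(v,v') + b_t\, g(h,h')$ with $a_0=b_0=1$, differentiation at $t=0$ yields
\[
\dot{g}(v+h, v'+h')~ =~ \dot{a}\, g_0(v,v')\, +\, \dot{b}\, g_0(h,h').
\]
Applying this with $v+h = \nabla_0 u$, $v'+h' = \nabla_0 v$, and using the orthogonal decomposition $\nabla_0 u = \nabla_0^{\Vcal} u + \nabla_0^{\Hcal} u$ (and likewise for $v$), I get
\[
\dot{g}(\nabla_0 u, \nabla_0 v)~ =~ \dot{a}\, g_0(\nabla_0^{\Vcal} u,\nabla_0^{\Vcal} v)\, +\, \dot{b}\, g_0(\nabla_0^{\Hcal} u, \nabla_0^{\Hcal} v).
\]

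Second, I would compute $d\dot{\nu}$. In a local $g_0$-orthonormal frame adapted to $\Vcal \oplus \Hcal$, the matrix of $g_t$ is block-diagonal with blocks $a_t\, I_j$ and $b_t\, I_k$, so $\det(g_t) = a_t^{j}\, b_t^{k}\, \det(g_0)$ and consequently $d\nu_t = a_t^{j/2}\, b_t^{k/2}\, d\nu_0$. Differentiating at $t=0$ gives
\[
d\dot{\nu}~ =~ \tfrac{1}{2}\,(j\, \dot{a} + k\, \dot{b})\, d\nu_0.
\]

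Finally I would insert these two expressions into Lemma \ref{lem:Lap-perturbation} and rearrange. Using orthogonality of $\Vcal$ and $\Hcal$ to write $g_0(\nabla_0 u, \nabla_0 v) = g_0(\nabla_0^{\Vcal} u, \nabla_0^{\Vcal} v) + g_0(\nabla_0^{\Hcal} u, \nabla_0^{\Hcal} v)$, the coefficient of $g_0(\nabla_0^{\Vcal} u, \nabla_0^{\Vcal} v)$ becomes $-2\dot{a} + (j\dot{a} + k\dot{b}) = (j-2)\dot{a} + k\dot{b}$, the coefficient of $g_0(\nabla_0^{\Hcal} u, \nabla_0^{\Hcal} v)$ becomes $-2\dot{b} + (j\dot{a} + k\dot{b}) = j\dot{a} + (k-2)\dot{b}$, and the coefficient of $(\Delta_0 u)v$ is $-(j\dot{a} + k\dot{b})$. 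Multiplying through by $2$ produces exactly the stated identity. There is no serious obstacle here: the argument is entirely linear-algebraic bookkeeping once $\dot{g}$ and $d\dot{\nu}$ are identified, and the only point requiring care is the correct exponent in $\det(g_t) = a_t^{j} b_t^{k} \det(g_0)$, which relies on $\Vcal$ and $\Hcal$ having constant ranks $j$ and $k$.
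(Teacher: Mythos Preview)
Your proposal is correct and follows essentially the same route as the paper: apply Lemma~\ref{lem:Lap-perturbation}, identify $\dot g$ and $d\dot\nu$ for the perturbation \eqref{eqn:split-conformal}, and substitute using the orthogonal splitting $\nabla_0 = \nabla_0^{\Vcal} + \nabla_0^{\Hcal}$. Your write-up is in fact slightly more explicit than the paper's in carrying out the final coefficient bookkeeping.
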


\begin{proof}
We will apply Lemma \ref{lem:Lap-perturbation}.
First note that $d\nu_{t}=a_t^{j/2} \cdot b_t^{k/2} \cdot d\nu_{0}$,
and hence since $a_0=1=b_0$ we have $d \dot{\nu} = (j/2)\dot{a} + (k/2) \dot{b}$. 
Because $\Vcal$ and $\Hcal$ are $g_t$-orthogonal for each $t$, 
we find that 
\[
g_0(\nabla_0 u, \nabla_0 v)~
=~
g_0(\nabla^{\Hcal}_0 u, \nabla^{\Hcal}_0 v)~
+~
g_0(\nabla^{\Vcal}_0 u, \nabla^{\Vcal}_0 v)
\]
and
\[ 
\dot{g}(\nabla_0 u, \nabla_0 v)~
=~
\dot{a} \cdot g_0(\nabla^{\Vcal}_0 u, \nabla^{\Vcal}_0 v)+
\dot{b} \cdot g_0(\nabla^{\Hcal}_0 u, \nabla^{\Hcal}_0 v).
\] 
The claim now follows from Lemma \ref{lem:Lap-perturbation}.
\end{proof}

Next we consider a somewhat different type of metric variation.
Let $X$ be a section of $\Hcal$ and let $Y$ be a section of $\Vcal$.  
Let $\xi$ (resp. $\eta$)
be the  dual one form defined by 
$\xi(Z) =g(X, Z)$  (resp. $\eta(Z) =g(Y,Z)$) for each $Z$.
Define
\[
g_{X,Y}~
=~
g~
+~
\xi \otimes \eta~
+~
\eta \otimes \xi.
\]

\begin{lem}
\label{lem:mixed-perturbation}
Consider the family $t \mapsto g_{tX,tY}$. 
For each smooth $u$, $v$, we have 
\begin{equation}
\label{eqn:mixed-perturbation}
\int_M \dot{\Delta} u \cdot v\, d\nu_0~
=~
-\int_M
\left( 
(Xu) \cdot (Y v)~ +~ (Y u)  \cdot (X v )
\right)\, 
d\nu_0.
\end{equation}
\end{lem}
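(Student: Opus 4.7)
The plan is to apply Lemma~\ref{lem:Lap-perturbation}: once we identify $\dot g$ and verify that $d\dot\nu$ vanishes, the desired formula drops out of the right-hand side of \eqref{eqn:pert-Laplacian}.

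First I would identify the infinitesimal perturbation tensor. From the definition $g_{X,Y}=g+\xi\otimes\eta+\eta\otimes\xi$, the tensor $\xi\otimes\eta+\eta\otimes\xi$ plays the role of $\dot g$ at $t=0$ for the family in question. Next I would check that $d\dot\nu=0$. Because $d\nu_t=\sqrt{\det g_t}\,dx$ in any coordinate chart, the first variation of the volume form is governed by $d\dot\nu=\tfrac12\,\tr_{g_0}(\dot g)\,d\nu_0$. A short calculation gives $\tr_{g_0}(\xi\otimes\eta)=g(X,Y)$, so $\tr_{g_0}(\dot g)=2\,g(X,Y)$. By hypothesis $X$ is a section of $\Hcal$ and $Y$ is a section of $\Vcal$; since these subbundles are $g$-orthogonal, $g(X,Y)\equiv 0$, and hence $d\dot\nu=0$.

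Second, I would evaluate $\dot g(\nabla_0 u,\nabla_0 v)$. Using the identity $Wu=g(W,\nabla_0 u)$, valid for any vector field $W$ and smooth function $u$, we obtain $\xi(\nabla_0 u)=Xu$ and $\eta(\nabla_0 v)=Yv$, and similarly with the roles of $u$ and $v$ exchanged. Consequently
\[
\dot g(\nabla_0 u,\nabla_0 v)\;=\;(Xu)(Yv)+(Yu)(Xv).
\]
Substituting this identity together with $d\dot\nu=0$ into the right-hand side of \eqref{eqn:pert-Laplacian} immediately produces \eqref{eqn:mixed-perturbation}.

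The main subtlety is the vanishing of $d\dot\nu$, which is exactly the place where the orthogonality hypothesis on $\Hcal$ and $\Vcal$ enters; without it a nonzero trace term would survive, producing additional contributions analogous to those appearing in Lemma~\ref{lem:split-rescaling}. Everything else is essentially bookkeeping: unpacking $\xi,\eta$ back into the vector fields $X,Y$ and invoking the general variational formula of Lemma~\ref{lem:Lap-perturbation}.
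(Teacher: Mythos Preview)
Your proof is correct and essentially identical to the paper's: both apply Lemma~\ref{lem:Lap-perturbation}, observe that $d\dot\nu=0$ from the $g$-orthogonality of $X$ and $Y$, and then unwind $\dot g(\nabla_0 u,\nabla_0 v)=(Xu)(Yv)+(Yu)(Xv)$ via $\xi(\nabla_0 u)=Xu$, $\eta(\nabla_0 v)=Yv$. The only difference is that you spell out the trace computation $\tr_{g_0}(\dot g)=2g(X,Y)$ for the vanishing of $d\dot\nu$, whereas the paper simply asserts it.
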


\begin{proof}
We apply Lemma \ref{lem:Lap-perturbation}. 
Because $X$ and $Y$ are $g$ orthogonal,
we find that $d\dot{\nu} = 0$.
We have $\dot{g} = \xi \otimes \eta +   \eta \otimes \xi$.
We have $\xi(\nabla u)= g(\nabla u, X) = Xu$ and 
$\eta(\nabla v) = g(\nabla u, Y) = Yv$  and so 
$\xi \otimes \eta(\nabla u, \nabla v) = Xu \cdot Y v$.
Similarly, $\eta \otimes \xi(\nabla u, \nabla v) = Yu \cdot X v$.
The claim follows. 
\end{proof}


\subsection{Metrics invariant under a torus action} \label{Subsection:MetricTorusInvariant}

In the following we will specialize to metrics that are 
invariant under a fixed action of the $d$-dimensional torus $T$.
Let $\Vcal$ be the subbundle consisting of vectors tangent to the $T$ action.  
let $\Hcal$ be the subbundle consisting of vectors 
that are $g$-orthogonal to $\Vcal$ where $g$ is a fixed  $T$-invariant metric.

Let us specialize Lemma \ref{lem:split-rescaling}.
We will assume that the smooth functions $a$ and $b$ in (\ref{eqn:split-conformal})
are $T$-invariant.  This implies that the metric $g_{a,b}$ is $T$-invariant.
The gradient $\nabla^\Vcal$ is a linear combination of the vector fields
$\partial_j$ given by (\ref{eqn:partial_j}), and hence 
$\nabla^{\Vcal}$ is well understood. On the other hand,
$\nabla^\Hcal$ is less well-understood in general. For this reason
we choose $(j-2) \cdot \dot{a} +k \cdot \dot{b}=0$ so as to 
eliminate the term in Lemma \ref{lem:split-rescaling} that involves
$\nabla^\Hcal$. Noting that $j=d$ and $k=n-d$ we obtain the following.

\begin{lem}
\label{lem:variation}
If $g$, $a$, and $b$ are $T$-invariant and 
$d \cdot \dot{a} + (n-d-2) \cdot \dot{b}=0$, then 
\[
\int \left( \dot{\Delta} u \right) v \, d\nu_0~
=~
- \int \dot{b} \cdot \left( \Delta_0 u \right) v\, d\nu_0~
+~
\frac{n-2}{d} \int \dot{b} \cdot 
g\left( \nabla^\Vcal_0 u, \nabla^\Vcal_0 v \right)\, d\nu_0.
\]
\end{lem}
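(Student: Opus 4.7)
The plan is to obtain Lemma \ref{lem:variation} as a direct specialization of Lemma \ref{lem:split-rescaling}, where the role of the constraint $d \cdot \dot{a} + (n-d-2) \cdot \dot{b}=0$ is precisely to kill the unwanted horizontal-gradient term on the right-hand side of that lemma.

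First I would set $j = d$ (the dimension of $\Vcal$, which coincides with the dimension of the torus since the action is free) and $k = n-d$ (the dimension of $\Hcal$) and substitute directly into the formula of Lemma \ref{lem:split-rescaling}. Because $g$, $a_t$, $b_t$ are all $T$-invariant, the decomposition $TM = \Vcal \oplus \Hcal$ used there is the one fixed at the beginning of \S\ref{Subsection:MetricTorusInvariant}, so Lemma \ref{lem:split-rescaling} applies with no modification.

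Next, I would observe that the coefficient in front of $g_0(\nabla^\Hcal_0 u, \nabla^\Hcal_0 v)$ is $j \cdot \dot{a} + (k-2)\cdot \dot{b} = d \cdot \dot{a} + (n-d-2)\cdot \dot{b}$, which is exactly the quantity assumed to vanish. Hence this term drops out entirely, which is the whole point of choosing that particular linear relation between $\dot{a}$ and $\dot{b}$ (and why the formula can be written in terms of $\nabla^\Vcal$ alone, which, as noted just before the lemma, is explicitly given by the vector fields $\partial_j$).

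The remaining step is purely arithmetic: substitute $\dot{a} = -\tfrac{n-d-2}{d}\dot{b}$ into the two surviving coefficients. For the coefficient of $(\Delta_0 u)v$ one computes
\[
j \cdot \dot{a} + k \cdot \dot{b}~=~ d \cdot \dot{a} + (n-d)\cdot \dot{b}~=~ -(n-d-2)\dot{b} + (n-d)\dot{b}~=~ 2 \dot{b},
\]
and for the coefficient of $g_0(\nabla^\Vcal_0 u, \nabla^\Vcal_0 v)$ one similarly finds
\[
(j-2)\cdot \dot{a} + k \cdot \dot{b}~=~ \tfrac{1}{d}\bigl[-(d-2)(n-d-2) + d(n-d)\bigr]\dot{b}~=~ \tfrac{2(n-2)}{d}\dot{b}.
\]
Dividing the resulting identity by $2$ yields the formula in the statement. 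I do not anticipate any real obstacle here; the content of the lemma lies entirely in the choice of linear combination, and the verification is a short algebraic simplification of Lemma \ref{lem:split-rescaling}.
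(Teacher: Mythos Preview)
Your proposal is correct and follows exactly the approach the paper has in mind: specialize Lemma~\ref{lem:split-rescaling} with $j=d$, $k=n-d$, use the constraint to kill the $\nabla^{\Hcal}$ term, and simplify the two remaining coefficients. (Note that the sentence just before the lemma in the paper contains a typo---it says $(j-2)\dot a + k\dot b = 0$ when it should read $j\dot a + (k-2)\dot b = 0$---but you correctly identified the coefficient of the horizontal term as the one to cancel.)
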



\section{The spectra of $\Delta_{g, \alpha}$ and $\Delta_{g, \beta}$ are generically disjoint}
\label{sec:separation}

Recall that $\Delta_{g,\alpha}$ is the restriction of the Laplacian 
to the space $H^k_{\alpha}$ where $\alpha \in \Zbb^d$.
Let ${\rm \spec}(\cdot)$ denote the spectrum of an operator.
The purpose of this section is to prove the following: 

\begin{thm}
\label{thm:perturb} 
 If $\alpha \neq \pm \beta \in \mathbb{Z}^d$, 
then the set 
of metrics $g$ for which 
${\rm \spec}(\Delta_{g,\alpha}) \cap {\rm \spec}(\Delta_{g,\beta}) = \emptyset$ 
 is residual in $\Mcal^T$.
\end{thm}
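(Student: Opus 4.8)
The plan is to use the standard Sard–Smale / residuality argument for eigenvalue separation, which goes back to Uhlenbeck and is used in exactly this form in Jung–Zelditch. First I would fix $\alpha\neq\pm\beta$ and, for each pair of positive integers $(m,m')$, consider the set
\[
  \Ucal_{m,m'}~=~\{\, g\in\Mcal^T \,:\, \text{the $m$-th eigenvalue of }\Delta_{g,\alpha}\neq\text{ the $m'$-th eigenvalue of }\Delta_{g,\beta}\,\}.
\]
I would show each $\Ucal_{m,m'}$ is open and dense in $\Mcal^T$; then $\bigcap_{m,m'}\Ucal_{m,m'}$ is residual and every $g$ in it satisfies $\spec(\Delta_{g,\alpha})\cap\spec(\Delta_{g,\beta})=\emptyset$. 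Openness is the easy part: eigenvalues of $\Delta_{g,\alpha}$ and $\Delta_{g,\beta}$ depend continuously on $g$ (they are eigenvalues of elliptic operators with $C^\ell$ coefficients depending continuously on the metric, and the spectra are discrete with finite multiplicities), so the condition that a given pair of them be distinct is open. Actually, to make this robust one should phrase it slightly differently: fix an interval $[0,\Lambda]$ and an open set $\Ocal\subset\Mcal^T$, and show that arbitrarily close to any $g\in\Ocal$ there is a metric whose $\alpha$- and $\beta$-spectra are disjoint in $[0,\Lambda]$; a diagonal/exhaustion argument over $\Lambda\to\infty$ then produces the residual set. I'll present it in the cleaner per-pair form above.

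The heart of the matter is density: given $g\in\Mcal^T$ and a coincidence $\lambda\in\spec(\Delta_{g,\alpha})\cap\spec(\Delta_{g,\beta})$, I must perturb $g$ within $\Mcal^T$ so as to move the two eigenvalues apart. Here I would use the variational formula of Lemma \ref{lem:variation}: for a $T$-invariant perturbation with $d\cdot\dot a+(n-d-2)\cdot\dot b=0$ and perturbation function $\dot b =: f$ (a $T$-invariant $C^\ell$ function on $M$, equivalently a $C^\ell$ function on $M/T$), the first variation of an eigenvalue $\lambda$ with $L^2$-normalized eigenfunction $\phi=(\phi_1,\phi_2)\in H^k_\alpha$ is, componentwise on each $\phi_i$,
\[
  \dot\lambda~=~\int_M f\cdot\Bigl(\tfrac{n-2}{d}\,g(\nabla^{\Vcal}\phi_i,\nabla^{\Vcal}\phi_i)-\lambda\,\phi_i^2\Bigr)\,d\nu_0,
\]
and for $u\in H^k_\alpha$ equation \eqref{eqn:vert-derivative} gives $|\nabla^{\Vcal}u|^2 = \sum_j (\partial_j u)^t g(\cdot)(\partial_k u)\cdots$ — more usefully, $-\partial_j^2 u=\alpha_j^2 u$, so the vertical energy density of the $\alpha$-eigenfunction is governed by $|\alpha|$ (in the appropriate $g$-weighted sense) while that of the $\beta$-eigenfunction is governed by $|\beta|$. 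Concretely, on each $T$-orbit the vertical Dirichlet energy of a function in $H^k_\alpha$ equals a fixed quadratic form in $\alpha$ applied to the restricted function's "mass", and similarly with $\beta$. The strategy is then: if $\dot\lambda_\alpha=\dot\lambda_\beta$ for every admissible $T$-invariant perturbation $f$, then the integrands must agree, i.e.
\[
  \tfrac{n-2}{d}\,|\nabla^{\Vcal}\phi|^2-\lambda\,|\phi|^2~=~\tfrac{n-2}{d}\,|\nabla^{\Vcal}\psi|^2-\lambda\,|\psi|^2
\]
as functions on $M/T$ (after integrating $|\phi|^2$, $|\nabla^{\Vcal}\phi|^2$ etc.\ over orbits and using the unitary identifications), where $\phi$ is the $\alpha$-eigenfunction and $\psi$ the $\beta$-eigenfunction. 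One then derives a contradiction: integrating this identity over $M/T$ and using $\int|\phi|^2=\int|\psi|^2=1$ together with $\int|\nabla^{\Vcal}\phi|^2 = $ (a multiple of $|\alpha|^2$-type quantity depending on the horizontal part of $\phi$) forces a relation incompatible with $\alpha\neq\pm\beta$; alternatively, and more cleanly, one combines this rescaling perturbation with the "mixed" perturbation of Lemma \ref{lem:mixed-perturbation} and the conformal perturbation of Lemma \ref{lem:split-rescaling} (with the $\nabla^{\Hcal}$-term retained) to get enough independent directions of variation to separate the eigenvalues unless $\phi$ and $\psi$ are "the same" in a way excluded by the weight constraint.

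I would organize the density step as its own lemma: \emph{for any $g$ with $\lambda$ a common eigenvalue, there is a $T$-invariant $C^\ell$ path $g_t$ with $g_0=g$ along which the $\alpha$-eigenvalue near $\lambda$ and the $\beta$-eigenvalue near $\lambda$ have distinct first derivatives.} Granting this, standard perturbation theory (Kato) for the finitely many eigenvalues in a window around $\lambda$ lets us push them apart, and doing this successively for each of the finitely many coincidences in $[0,\Lambda]$ yields a nearby metric with $\spec(\Delta_{g,\alpha})\cap\spec(\Delta_{g,\beta})\cap[0,\Lambda]=\emptyset$; combined with openness this gives density of $\Ucal_{m,m'}$. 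The main obstacle is precisely the derivation of the contradiction in the density lemma: I expect one must carefully exploit that the constraint \eqref{eqn:twist} ties the vertical energy of an $H^k_\alpha$-function rigidly to $\alpha$ (via \eqref{eqn:vert-derivative}), so that the map "metric $\mapsto$ (eigenvalue$_\alpha$, eigenvalue$_\beta$)" cannot have the diagonal in its image to first order unless $\alpha$ and $\beta$ produce proportional vertical-energy forms, i.e.\ unless $\alpha=\pm\beta$. Identifying the cleanest family of admissible perturbations $(\dot a,\dot b,X,Y)$ that realizes this — keeping everything $T$-invariant so we stay in $\Mcal^T$ — is where the real work lies; the rest is the routine Baire-category packaging.
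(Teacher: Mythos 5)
Your Baire-category skeleton (openness $+$ density of the good set per spectral window) matches the paper's, but your density argument has a genuine gap, and it is exactly the step you flag as ``where the real work lies.'' You propose to perturb away a coincidence at an \emph{arbitrary} $g\in\Mcal^T$ using the split-rescaling of Lemma \ref{lem:variation}. Following your own computation through: if $\dot\lambda^\alpha=\dot\lambda^\beta$ for every $T$-invariant $f=\dot b$, then pointwise $\bigl(\tfrac{n-2}{d}A_\alpha-\lambda\bigr)|\phi|^2=\bigl(\tfrac{n-2}{d}A_\beta-\lambda\bigr)|\psi|^2$, where $A_\gamma(x)=\sum g^{jk}(x)\gamma_j\gamma_k$ and $\phi,\psi$ are the (unknown) normalized eigenfunctions in $H^0_\alpha,H^0_\beta$. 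Integrating merely yields $\int A_\alpha|\phi|^2=\int A_\beta|\psi|^2$, which carries no contradiction with $\alpha\neq\pm\beta$: the quadratic forms $A_\alpha,A_\beta$ depend on the metric and the eigenfunction densities $|\phi|^2,|\psi|^2$ are free. So the split-rescaling family alone does not move the $\alpha$- and $\beta$-eigenvalues apart at a general $g$, and your fallback (``combine with the mixed perturbation to get enough independent directions'') is an assertion, not an argument.

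The paper avoids this difficulty by \emph{not} perturbing at an arbitrary metric. It first produces a \emph{single} metric $g_0$ in each $\Dcal(\Lambda)$: it takes $g_0$ with $g_0(\partial_i,\partial_j)=\delta_{ij}$ and uses the explicit rank-one perturbations $\dot g=g_0-n\,\omega_j\otimes\omega_j$ (for $j=k$) and $\dot g=-(\omega_j\otimes\omega_k+\omega_k\otimes\omega_j)$ (for $j\neq k$). At this special $g_0$, formula \eqref{eqn:pert} together with \eqref{eqn:eval-vector} collapses to $\dot\lambda=-\lambda_0+n\alpha_j^2$, respectively $\dot\lambda=\alpha_j\alpha_k$, \emph{independently of the eigenfunction}; since $\alpha\neq\pm\beta$ forces some $\alpha_j\alpha_k\neq\beta_j\beta_k$, the derivatives differ, and Lemma \ref{lem:countable-coincidence} (analyticity of eigenvalue branches) converts ``different derivatives'' into ``$\Dcal(\Lambda)$ nonempty.'' Density then follows for free: for any $g$, analyticity of the branches along the linear path $t\mapsto tg+(1-t)g_0$, again via Lemma \ref{lem:countable-coincidence}, shows the bad $t$'s are countable. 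So the two ingredients you are missing are (i) anchoring at a special $g_0$ where the vertical-energy term is \emph{identically} $n\alpha_j^2$ or $\alpha_j\alpha_k$ rather than metric- and eigenfunction-dependent, and (ii) using real-analyticity along straight segments to spread the nonemptiness at $g_0$ into density, instead of trying to separate eigenvalues at every bad metric directly.
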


Our proof of Theorem \ref{thm:perturb}  
is based on analytic perturbation theory \cite{Kato}.
In this section $t \mapsto g_t \in \Mcal^T$ 
will denote an analytic path of torus-invariant 
metrics defined for $t \in (-\delta, \delta)$
where $\delta>0$.

\begin{lem}
\label{lem:weight-analytic}
For each $t$ there exists a $g_t$-orthonormal basis $\{\phi^{\alpha}_{j,t}\}$
of $H^k_{\alpha}$ consisting of eigenfunctions of $\Delta_{g_t,\alpha}$ 
so that $t \mapsto \phi^{\alpha}_{j,t}$ is analytic for each $j$.  
\end{lem}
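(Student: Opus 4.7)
The plan is to reduce the statement to a standard application of Kato's analytic perturbation theory for self-adjoint operators with compact resolvent (see Kato, \emph{Perturbation Theory for Linear Operators}, Ch.~VII). The obstacle is that the Hilbert space structure on $H^0_\alpha$ itself depends on $t$ via the measure $d\nu_{g_t}$, so first we must transport everything to a single fixed Hilbert space before invoking the perturbation machinery.

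First I would fix the reference measure $d\nu_{g_0}$ and write $d\nu_{g_t} = \rho_t^2 \cdot d\nu_{g_0}$ where $\rho_t$ is a positive $C^\ell$ function on $M$ depending analytically on $t$ (since $g_t$ does). The multiplication map $U_t : L^2(M, d\nu_{g_0}) \to L^2(M, d\nu_{g_t})$, $U_t(u) = \rho_t^{-1} \cdot u$, is a unitary isomorphism, and because the torus action preserves each $g_t$, the functions $\rho_t$ are $T$-invariant. Consequently $U_t$ restricts to a unitary $U_{t,\alpha}: H^0_\alpha(d\nu_{g_0}) \to H^0_\alpha(d\nu_{g_t})$ which depends analytically on $t$. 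Setting
\[
A_t \;=\; U_{t,\alpha}^{-1} \circ \Delta_{g_t,\alpha} \circ U_{t,\alpha},
\]
we obtain a family of operators on the single Hilbert space $H^0_\alpha$ equipped with the fixed inner product from $d\nu_{g_0}$.

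Next I would check that $\{A_t\}$ is an analytic family of self-adjoint operators of type (A) in Kato's sense. The common domain is $H^2_\alpha$: this is preserved because conjugation by multiplication by the $C^\ell$-smooth, $T$-invariant function $\rho_t$ maps $H^2_\alpha$ to itself. In local coordinates $\Delta_{g_t}$ has coefficients built algebraically from $g_t$ and its first derivatives (and from $\log \det g_t$), all of which depend analytically on $t$ in the relevant $C^{\ell-1}$ topology; the conjugation by $\rho_t^{\pm 1}$ only multiplies and adds first-order terms, whose coefficients are again analytic in $t$. Thus for each $u \in H^2_\alpha$ the map $t \mapsto A_t u \in H^0_\alpha$ is real-analytic, which is exactly the type (A) criterion. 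Self-adjointness on $H^2_\alpha$ is inherited from the self-adjointness of $\Delta_{g_0}$ together with the unitary conjugation, and compactness of the resolvent follows from Rellich's lemma (already invoked in Section 2).

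Having placed ourselves in the hypotheses of Kato's theorem (Ch.~VII, \S3, Thm 3.9 and its refinement for self-adjoint holomorphic families), we conclude that the eigenvalues of $A_t$ can be listed as real-analytic functions $\lambda_{j,t}$ and the eigenvectors as analytic functions $\psi_{j,t} \in H^2_\alpha$ forming an orthonormal basis of $H^0_\alpha$ with respect to the fixed inner product. Pulling back to the original space, $\phi^\alpha_{j,t} := U_{t,\alpha}(\psi_{j,t})$ is an eigenfunction of $\Delta_{g_t,\alpha}$ and the family $\{\phi^\alpha_{j,t}\}_j$ is a $g_t$-orthonormal basis of $H^0_\alpha$; analyticity in $t$ is preserved by the analytic unitary $U_{t,\alpha}$. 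The point I expect to write out with the most care is the type (A) verification — specifically, confirming that the map $t \mapsto A_t u$ really is analytic as an $H^0_\alpha$-valued function for each $u \in H^2_\alpha$, which amounts to checking that the coefficients of the conjugated Laplacian depend analytically on $t$ in a suitable Banach norm.
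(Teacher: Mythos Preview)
Your proof is correct and follows the same route as the paper: both invoke Kato's Theorem~VII.3.9 for self-adjoint holomorphic families of type~(A) with compact resolvent. The paper's argument is terser and does not explicitly address the $t$-dependence of the $L^2$ inner product; your unitary conjugation by $\rho_t^{-1}$ is the standard device for reducing to a fixed Hilbert space, and it also makes transparent why the resulting basis is $g_t$-orthonormal rather than merely $g_0$-orthonormal.
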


\begin{proof}
The operator $\Delta_{g_t, \alpha}: H^0_{\alpha} \to H^0_{\alpha}$ 
is self-adjoint with domain $H^2_{\alpha}$. 
For each fixed $u \in H^2_{\alpha}$, the path 
$t \mapsto \Delta_{g_t, \alpha}u$ is analytic, and hence 
$t \mapsto \Delta_{g_t, \alpha}$ is a holomorphic family of type (A)
in the sense of Kato.
Rellich's Lemma implies that $\Delta_{g_t, \alpha}$ has compact 
resolvent.  Therefore, the claim follows from Theorem 3.9
in Chapter VII of \cite{Kato}.
\end{proof}

We will let $\lambda^{\alpha}_{j,t}$ denote the eigenvalue 
associated to the eigenfunction $\phi^{\alpha}_{j,t}$.
Since $t \mapsto \phi^{\alpha}_{j,t}$ is analytic,  the path 
$t \mapsto \lambda^{\alpha}_{j,t}$ is also analytic.
Analyticity implies that two eigenvalue branches $\lambda^{\alpha}_{j,t}$ and 
$\lambda^{\beta}_{k,t}$ are either equal to each other for
countably many $t$  or they coincide for all values of $t$.
To be precise, let $\Lambda >0$ and define
$I_{\alpha, \beta}(\Lambda) \subset (-\delta, \delta)$  to be the set of 
$t$ such that ${\rm \spec}(\Delta_{g_t, \alpha}) \cap [0, \Lambda] 
\cap {\rm \spec}(\Delta_{g_t, \beta}) \neq \emptyset$. 
Here we assume that $\alpha \neq \beta$ and hence 
$H^0_{\alpha} \perp H^0_{\beta}$ for each $t$.

\begin{lem}
\label{lem:countable-coincidence}
Either $I_{\alpha,\beta}(\Lambda)$ is countable 
or there exists $j,k$ such that 
$\lambda^{\alpha}_{j,t}= \lambda^{\beta}_{k,t}$ for each $t$.
\end{lem}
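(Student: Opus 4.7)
The plan is to exploit the analyticity provided by Lemma \ref{lem:weight-analytic} together with the basic dichotomy that an analytic function on a connected interval either vanishes identically or has only isolated zeros. Specifically, for each pair of indices $(j,k) \in \Nbb \times \Nbb$, consider the function
\[
f_{j,k}(t)~ :=~ \lambda^{\alpha}_{j,t} - \lambda^{\beta}_{k,t}.
\]
Since $t \mapsto \lambda^{\alpha}_{j,t}$ and $t \mapsto \lambda^{\beta}_{k,t}$ are both real-analytic on $(-\delta, \delta)$, so is $f_{j,k}$. The dichotomy then says that either $f_{j,k} \equiv 0$ on $(-\delta,\delta)$, in which case we obtain the second alternative of the lemma, or the zero set $Z_{j,k} := \{t \in (-\delta,\delta) : f_{j,k}(t) = 0\}$ consists of isolated points and is therefore at most countable.

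Next I would express $I_{\alpha,\beta}(\Lambda)$ as a countable union. Since Lemma \ref{lem:weight-analytic} supplies an analytic basis indexed by $j \in \Nbb$ (respectively $k \in \Nbb$) that exhausts the spectrum of $\Delta_{g_t,\alpha}$ (respectively $\Delta_{g_t,\beta}$) for every $t$, one has
\[
I_{\alpha,\beta}(\Lambda)~ \subseteq~
\bigcup_{(j,k) \in \Nbb \times \Nbb}
\left\{ t \in (-\delta, \delta) \, : \, f_{j,k}(t) = 0 \text{ and } \lambda^{\alpha}_{j,t} \leq \Lambda \right\}.
\]
If no pair $(j,k)$ satisfies $f_{j,k} \equiv 0$, then each set on the right-hand side is contained in $Z_{j,k}$ and is therefore at most countable. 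Since the index set $\Nbb \times \Nbb$ is countable, $I_{\alpha,\beta}(\Lambda)$ is a countable union of countable sets, and hence itself countable.

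The one subtlety worth flagging is ensuring that the analytic parametrization truly captures every eigenvalue (with multiplicity) at every $t$. This is exactly the content of Lemma \ref{lem:weight-analytic}, which produces an orthonormal basis of $H^k_{\alpha}$ consisting of analytic eigenfunction branches; thus for each $t$ and each eigenvalue $\lambda$ of $\Delta_{g_t, \alpha}$, one has $\lambda = \lambda^{\alpha}_{j,t}$ for some $j$ (counted with multiplicity), so the covering inclusion displayed above is valid. I do not anticipate any serious obstacle beyond stating this covering carefully; the argument is essentially formal once the analytic branch structure from Kato's theory is in hand.
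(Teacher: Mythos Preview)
Your proposal is correct and follows essentially the same approach as the paper: both arguments use the analyticity of the eigenvalue branches $t \mapsto \lambda^{\alpha}_{j,t}$ and $t \mapsto \lambda^{\beta}_{k,t}$ to conclude that each coincidence set $\{t : \lambda^{\alpha}_{j,t} = \lambda^{\beta}_{k,t}\}$ is either all of $(-\delta,\delta)$ or countable, and then take the countable union over $(j,k)$. Your version is slightly more detailed in spelling out the covering inclusion and the role of Lemma~\ref{lem:weight-analytic}, but the argument is the same.
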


\begin{proof}
Suppose that there does not exist $j$ and $k$ so that 
$\lambda^{\alpha}_{j,t}= \lambda^{\beta}_{k,t}$ for all
$|t| < \delta$.  Real-analyticity implies that for each $j$ and $k$,
the set $\left\{ t \in (-\delta, \delta) \, :\, \lambda^{\alpha}_{j,t} =  \lambda^{\beta}_{k,t} \right\}$
is countable. Hence the union of these sets over $j,k$ is countable.
\end{proof}

\begin{proof}[Proof of Theorem \ref{thm:perturb} ]
Fix $\alpha \neq \pm \beta$.
For each $\Lambda >0$, let  $\Dcal(\Lambda)=\Dcal_{\alpha, \beta}(\Lambda)$
denote the set of $g \in \Mcal^T$
so that ${\rm \spec}(\Delta_{g, \alpha})\cap [0, \Lambda] \cap {\rm \spec}(\Delta_{g, \beta}) 
= \emptyset$. It suffices to show that $\Dcal(\Lambda)$ is open and dense in $\Mcal^T$
for each $\Lambda$.

To show that $\Dcal(\Lambda)$ is open will show that 
the complement is closed.
Suppose that there exist a sequence $g_m \in \Mcal^T$ 
and a sequence of $L^2$ unit norm eigenfunctions 
$u_m^{\alpha}$ of $\Delta_{g_m, \alpha}$ and 
$u_m^{\beta}$ of $\Delta_{g_m, \beta}$ so that 
$u_m^{\alpha}$ and $u_m^{\beta}$ have the same eigenvalue $\lambda_m \leq \Lambda$.
By passing to a subsequence if necessary, we may assume that 
$\lambda_m$ converges to $\lambda$ and that $u_m^{\alpha}$ (resp.  $u_m^{\beta}$)
converges to a unit norm eigenfunction $u^{\alpha}$ of $\Delta_{g, \alpha}$
(resp. $u^{\beta}$ of $\Delta_{g, \beta}$) with (the same) eigenvalue $\lambda$.
Hence $g$ lies in the complement of $\Dcal(\Lambda)$.
Thus, $\Dcal(\Lambda)$ is open.

To show that $\Dcal(\Lambda)$ is dense, we first show that $\Dcal(\Lambda)$ 
is nonempty.
Since $\alpha \neq \pm \beta$, there exists $j, k \in \{1, \ldots, d\}$
such that $\alpha_j \cdot \alpha_k \neq \beta_j\cdot \beta_k$.
We consider separately the cases where $j=k$ and $j \neq k$.

Suppose $j=k$. Let $g_0$ in $\Mcal^T$ be a metric on $M$ such that 
$g_0(\partial_i, \partial_j) =\delta_{ij}$ for each $i,j$.
Define the 1-form $\omega_j$ by $\omega_j(X)= g_0(X,\partial_j)$. 
If $u \in H^1_{\alpha}$, then by (\ref{eqn:vert-derivative})
\begin{equation}
\label{eqn:eval-vector}
\omega_j(\nabla_{g_0} u)~ 
= \partial_j u~ 
=~
\alpha_j \cdot 
\left(\begin{array}{cc}
0 & 1 \\
-1 & 0 
\end{array}
\right)
\cdot 
u.
\end{equation}
Define a path $t \mapsto g_t$ of $(2,0)$ tensors by
\begin{equation}
\label{eqn:i=j}
g_t~ 
=~
(1+t) \cdot g_0~
-~
t \cdot n \cdot \omega_j \otimes \omega_j.
\end{equation}
There exists $\delta>0$ so that $g_t$ is a Riemannian metric for $|t| < \delta$. 
A straightforward computation shows that if $d\nu_t$ is the measure
associated to $g_t$ then 
\[  
d \dot{\nu}~ 
=~ 
\frac{n}{2} \cdot \left(1\, -\, g_0(\partial_j, \partial_j) \right)\, d \nu_{0}.
\]

If $\Dcal(\Lambda)$ were empty, then we would have $I(\lambda)=(-\delta, \delta)$,
and hence Lemma \ref{lem:countable-coincidence} 
implies that there exist analytic paths of unit norm eigenfunctions
$t \mapsto u^\alpha_t$ and $t \mapsto u^\beta_t$ with common 
eigenvalue $\lambda_t$ so that for each $t \in (-\delta, \delta)$, the 
function $u^\alpha_t$  (resp. $u^{\beta}_t$) is an eigenfunction
of $\Delta_{\alpha,t}$ (resp. $\Delta_{\beta,t}$) 
with (the same) eigenvalue $\lambda_t$.
For the metric family defined in (\ref{eqn:i=j}) we have 
$\dot{g}= g_0-n \cdot \omega_j \otimes \omega_j$, and thus, 
using (\ref{eqn:eval-vector}) and the fact that $\int |u^{\alpha}_0|^2= 1$, 
we would find 
that formula (\ref{eqn:pert}) specializes to
\begin{equation*}
\dot{\lambda}~ 
=~
- \lambda_0~
+~
n \cdot \alpha_j^2.
\end{equation*}
The same formula would also hold with $\alpha$ replaced by $\beta$, and so 
we would find that $\alpha_j^2= \beta_j^2$, a contradiction.
Therefore $\Dcal(\Lambda)$ is not empty.

The proof in the case when $j \neq k$ is similar.
In this case consider the path of metrics
\begin{equation}
g_t~ 
=~
g_0~
-~
t 
\cdot 
\left( \omega_j \otimes \omega_k + \omega_k \otimes \omega_j \right).
\end{equation}
We have $d \dot{\nu}= 2 g_0(\partial_j, \partial_k) \, d \nu_{0}$,
and so (\ref{eqn:pert}) specializes to
\[
\dot{\lambda}~
=~
\alpha_i \cdot \alpha_j
\]
as well as the same equation with $\alpha$
replaced by $\beta$. Hence 
$\alpha_j \cdot \alpha_k = \beta_j \cdot \beta_k$,
a contradiction. Thus $\Dcal(\Lambda)$ is nonempty in this case as well.

To see that $\Dcal(\Lambda)$ is dense, 
given some other $g \in \Mcal^T$, consider linear path 
$g_t:= t \cdot m + (1-t) \cdot g_0$, $0 \leq t \leq 1$, in $\Mcal^T$
that joins $g_0$ to $g$. Because $g_0 \in \Dcal(\Lambda)$,
Lemma \ref{lem:countable-coincidence} implies that 
the set of $t$ such that $g_0 \notin \Dcal(\Lambda)$
is countable. Therefore, in every neighborhood of $g$ there exists
a metric $g'$ that lies in $\Dcal(\Lambda)$.
\end{proof}

\section{Generic simplicity for $\Delta_{g,\alpha}$.} 
\label{sec:simplicity}

In this section, we finish the proof of part (1) of Theorem \ref{thm:main}. 
In particular, we adapt the method of K. Uhlenbeck \cite{Uhlenbeck} 
to prove that there exists a residual subset of metrics $g$ so that 
the dimension of $\ker(\Delta_{g,\alpha}-\lambda I)$ is at most two.

Fix $\alpha \in \mathbb{Z}^{d} \setminus \{0\}$. 
Since $\alpha \neq 0$, each eigenspace of $\Delta_{g, \alpha}$ has dimension at least two. Indeed, if $u \in \ker(\Delta_{\alpha,g}-\lambda I)$, then

\[
u^*~
:=~ 
\left(
\begin{array}{cc}
0 & 1  \\
-1 & 0
\end{array} 
\right) 
\cdot 
u
\]
also lies in $\ker(\Delta_{\alpha,g}-\lambda I)$ and $\int u \cdot u^* =0$.

Given a $T$-invariant metric $g$ on $M$, define 
$F_g: H_{\alpha}^k \times \Rbb^2 \to H_{\alpha}^{k-2}$ 
by 
\begin{equation}
\label{eqn:f-defn}
F_g(u, a, b)~ 
=~
\Delta_{g, \alpha}u~ 
-~
a \cdot u~
-~
b \cdot u^*.
\end{equation}

A calculation shows 
that given any $u=(u_1, u_2)^{t}\in H^{k}_\alpha$, 
we have $F_g(u, a, b)=0$ 
if and only if $u_1+iu_2$ is a complex eigenfunction of $\Delta_{g}$ with eigenvalue $a-bi$.
But the eigenvalues of $\Delta_g$ are real and so
the zero locus of $F_g$ has the following form:
\begin{equation}
\label{eqn:F_zero_locus}
F_{g}^{-1}(0)~ 
=~
\{(u, \lambda, 0)\,|\, (\Delta_{g, \alpha}-\lambda I) u=0\}.
\end{equation}

The following Lemma should be compared with 
Lemma 2.2 in \cite{Uhlenbeck}.

\begin{lem}
\label{lem:simplicity}
Let $u \neq 0$ be in 
$\ker(\Delta_{g, \alpha}-\lambda I)$. 
Then the dimension of $\ker(\Delta_{g, \alpha}- \lambda I)$ 
equals two if and only if  
$(dF_g)_{(u,\lambda, 0)}$ is surjective.
\end{lem}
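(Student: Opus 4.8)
The plan is to compute the differential $(dF_g)_{(u,\lambda,0)}$ explicitly and identify its range and cokernel. Since $F_g$ is linear in $(a,b)$ and its $H^k_\alpha$-component is the affine-linear map $v \mapsto \Delta_{g,\alpha}v - \lambda v$ (evaluated at the zero-locus point where $a=\lambda$, $b=0$), the differential at $(u,\lambda,0)$ is
\[
(dF_g)_{(u,\lambda,0)}(v, \dot a, \dot b)~=~(\Delta_{g,\alpha} - \lambda I)v~-~\dot a \cdot u~-~\dot b \cdot u^*.
\]
Here $u^* = \left(\begin{smallmatrix} 0 & 1 \\ -1 & 0 \end{smallmatrix}\right) u$ as in \eqref{eqn:f-defn}. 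First I would record that $\Delta_{g,\alpha} - \lambda I$ is an elliptic self-adjoint Fredholm operator of index zero on $H^k_\alpha$, so its image is the $L^2$-orthogonal complement of $W := \ker(\Delta_{g,\alpha}-\lambda I)$ (intersected with $H^{k-2}_\alpha$, but by elliptic regularity this is harmless). Thus the image of $(dF_g)_{(u,\lambda,0)}$ is $W^\perp + \Span\{u, u^*\}$, and surjectivity is equivalent to $\Span\{u,u^*\} + W^\perp = H^{k-2}_\alpha$, i.e.\ to $\Span\{u,u^*\}$ projecting onto all of $W$ under the $L^2$-orthogonal projection $P_W$.

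Next I would analyze that projection condition. Since $u \in W$, we have $P_W u = u$, and since $u^* \in W$ as well (this is exactly the observation recalled just before the lemma: $u^*$ also lies in $\ker(\Delta_{g,\alpha}-\lambda I)$), we get $P_W u^* = u^*$. Moreover $\int_M u \cdot u^*\, d\nu_g = 0$, so $u$ and $u^*$ are $L^2$-orthogonal and, being both nonzero, linearly independent. Hence $\Span\{u,u^*\}$ is a genuinely $2$-dimensional subspace of $W$. Therefore: if $\dim W = 2$, then $\Span\{u,u^*\} = W$ and the projection is onto, so $(dF_g)_{(u,\lambda,0)}$ is surjective; conversely, if $(dF_g)_{(u,\lambda,0)}$ is surjective then $W = W^\perp{}^\perp \cap (\text{image}) $ forces $W \subseteq \Span\{u,u^*\}$ (since the image already contains $W^\perp$, surjectivity means $\Span\{u,u^*\}$ must supply a complement, which has dimension $\dim W$), giving $\dim W \le 2$; combined with the a priori bound $\dim W \ge 2$ from the $T$-action, we conclude $\dim W = 2$. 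This establishes the equivalence.

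Concretely the two directions run as follows. Decompose any target element $w \in H^{k-2}_\alpha$ as $w = w_0 + w_1$ with $w_1 \in W$ and $w_0 \in W^\perp$. By Fredholm theory there is $v \in H^k_\alpha$ with $(\Delta_{g,\alpha}-\lambda I)v = w_0$. If $\dim W = 2$, write $w_1 = c_1 u + c_2 u^*$ (possible since $\{u,u^*\}$ is a basis of $W$) and then $(dF_g)_{(u,\lambda,0)}(v, -c_1, -c_2) = w_0 + c_1 u + c_2 u^* = w$, proving surjectivity. For the converse, suppose $\dim W \ge 3$ and pick $\psi \in W$ with $\psi \perp u$, $\psi \perp u^*$, $\psi \ne 0$; then for any $(v,\dot a,\dot b)$, the $L^2$-inner product of $(dF_g)_{(u,\lambda,0)}(v,\dot a,\dot b)$ with $\psi$ equals $\langle (\Delta_{g,\alpha}-\lambda I)v,\psi\rangle - \dot a\langle u,\psi\rangle - \dot b\langle u^*,\psi\rangle = 0$ because $\psi \in \ker$ of the self-adjoint operator and $\psi$ kills both $u$ and $u^*$; hence $\psi$ is not in the image and $(dF_g)_{(u,\lambda,0)}$ is not surjective. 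The main (mild) obstacle is purely bookkeeping: making sure the Fredholm/closed-range statement for $\Delta_{g,\alpha}-\lambda I$ between the correct Sobolev completions is invoked correctly and that one may replace $u^* \in W$ — which is immediate from the matrix computation preceding the lemma — so that $\Span\{u,u^*\}$ really is a $2$-dimensional subspace of $W$ rather than of some larger space; once that is in place the argument is essentially a dimension count.
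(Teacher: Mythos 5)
Your proof is correct and follows essentially the same route as the paper's: compute $(dF_g)_{(u,\lambda,0)}$, use self-adjointness of $\Delta_{g,\alpha}-\lambda I$ to identify its image with $W^\perp$, observe that $u,u^*$ span a two-dimensional subspace of $W=\ker(\Delta_{g,\alpha}-\lambda I)$, and reduce the surjectivity question to a dimension count in the quotient. The paper packages the two directions more tersely via the isomorphism $H^k_\alpha/\im(\Delta_{g,\alpha}-\lambda I)\cong W$, but the content is identical.
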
 

\begin{proof}
We have $F_g(u, \lambda, 0)=0$  if and only if
$u \in \ker(\Delta_{g,\alpha}-\lambda I)$.
From (\ref{eqn:f-defn}) we find that for each 
$(v,\mu, \nu) \in H^k_{\alpha} \times \Rbb^2 $   
\begin{equation}
\label{eqn:diff-fg}
(dF_g)_{(u, \lambda, 0)}(v,\mu, \nu)~
=~
(\Delta_{g, \alpha}- \lambda  I)\cdot v\, -\, \mu \cdot u-\nu\cdot u^*.
\end{equation}
Thus 
\begin{equation}
\label{eqn:direct-sum}
\im((dF_g)_{(u, \lambda,0)})~
=~
{\rm im}(\Delta_{g,\alpha} - \lambda I)~ +~ \Rbb \cdot u~ +~ \Rbb \cdot u^*. 
\end{equation}
Because $\Delta_{g, \alpha} -\lambda I$ is self-adjoint, 
the space $\ker(\Delta_{g, \alpha} -\lambda I)$ is the orthogonal 
complement of $\im(\Delta_{g, \alpha} -\lambda I)$.
Hence the sums on the right hand side of (\ref{eqn:direct-sum}) are direct,
and so the quotient 
space $\im((dF_g)_{(u, \lambda,0)})/ {\rm im}(\Delta_{g,\alpha} - \lambda I)$
is two dimensional. 

On the other hand,
$H^k_{\alpha}/ {\rm im}(\Delta_{g,\alpha} - \lambda I)$ is 
isomorphic to $\ker(\Delta_{g, \alpha}- \lambda I)$.
The claim follows. 
\end{proof}

Lemma \ref{lem:simplicity} reduces generic irreducibility 
to the assertion that $(dF_g)_{(u, \lambda,0)}$ is surjective
for the generic $g$ and each eigenpair
$(u, \lambda)$ of $\Delta_{g, \alpha}$.
The next step in Uhlenbeck's method is to apply  
`parametric transversality' to the family 
$(u,a,b,g) \mapsto F_g(u, a, b)$. 

Let $\Scal^T$ denote the real Hilbert space of real-valued 
Sobolev $H^s$ symmetric $(0,2)$-tensors on $M$ that are $T$-invariant.
The space $\Mcal^T$ of Riemannian metrics on $M$
is the open cone in $\Scal^T$ consisting of positive symmetric $(0,2)$ tensors.
Define 
$F: H^k_{\alpha} \times \Rbb^2  \times \Mcal^T  \to H^{k-2}_{\alpha}$
by 
\begin{equation}
\label{eqn:F-defn}
F(u,a, b ,g)~ 
:=~ 
F_{g} (u, a, b)~ 
=~
\Delta_{g, \alpha}u\, -\, a \cdot u\, -\, b \cdot u^*.
\end{equation}

The following is a variant of parametric transversality.
Let $\pi: H_{\alpha}^{k}
\times \Rbb^2 \times \Mcal^T \to \Mcal^T$
denote the natural projection, for the remainder of this section. 

\begin{prop}
\label{prop:parametric}
Let $\Ucal \subset \Mcal^T$ be an open set.  If for 
each $x\in F^{-1}(0) \cap \pi^{-1}(\Ucal)$,  
the differential $dF_{x}$ is surjective, 
then there exists a residual subset of $\Rcal \subset \Ucal$ so that 
for each $g \in \Rcal$ and each $(u, \lambda, 0) \in F_g^{-1}(0)$,
the differential $(dF_g)_{(u, \lambda, 0)}$ is surjective.
\end{prop}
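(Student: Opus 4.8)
The plan is to deduce this from the standard parametric transversality theorem (the Sard--Smale theorem) applied to the map $F$, using the hypothesis that $dF_x$ is surjective at every zero lying over $\Ucal$. First I would observe that $F: H^k_\alpha \times \Rbb^2 \times \Mcal^T \to H^{k-2}_\alpha$ is a $C^1$ (in fact smooth) map of Banach manifolds, being affine-linear in $(a,b)$ and in $g$, with the dependence on $g$ entering through $\Delta_{g,\alpha}$, which depends smoothly on the metric. Since $dF_x$ is surjective for every $x \in F^{-1}(0) \cap \pi^{-1}(\Ucal)$, the set $Z := F^{-1}(0) \cap \pi^{-1}(\Ucal)$ is a $C^1$ Banach submanifold, and $\ker dF_x$ splits in $H^k_\alpha \times \Rbb^2 \times \Scal^T$ because $\Delta_{g,\alpha} - \lambda I$ is Fredholm of index zero (self-adjoint elliptic), so that the partial differential $(dF_g)_{(u,\lambda,0)}$ is Fredholm and hence $dF_x$, which differs from it by the finite-rank-plus-$\Scal^T$ contribution, has the required splitting. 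This is precisely the setup in which the Sard--Smale theorem applies to the restriction $\pi|_Z : Z \to \Ucal$.

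Next I would verify the hypotheses of Sard--Smale for $\pi|_Z$. The key point is that $\pi|_Z$ is a $C^1$ Fredholm map: at a point $x = (u,\lambda,0,g) \in Z$, the kernel of $d(\pi|_Z)_x$ is $\{(v,\mu,\nu,0) : (dF_g)_{(u,\lambda,0)}(v,\mu,\nu) = 0\}$, which is finite-dimensional since $(dF_g)_{(u,\lambda,0)}$ is Fredholm by the computation in the proof of Lemma~\ref{lem:simplicity} (its image is $\im(\Delta_{g,\alpha}-\lambda I) + \Rbb u + \Rbb u^*$, a finite-codimension closed subspace, and its kernel is $\ker(\Delta_{g,\alpha}-\lambda I)^{\perp}$-related, hence finite-dimensional modulo the finite-dimensional $\Rbb^2$ factor). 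The cokernel of $d(\pi|_Z)_x$ is canonically isomorphic to the cokernel of $(dF_g)_{(u,\lambda,0)}$, again finite-dimensional. A routine diagram chase — identical to the one in Uhlenbeck's paper — shows that $g$ is a regular value of $\pi|_Z$ if and only if for every $x \in Z$ with $\pi(x)=g$ the partial differential $(dF_g)_{(u,\lambda,0)}$ is surjective. Then the Sard--Smale theorem gives that the set $\Rcal$ of regular values of $\pi|_Z$ is residual in $\Ucal$, and unwinding the identification yields exactly the conclusion: for $g \in \Rcal$ and each $(u,\lambda,0) \in F_g^{-1}(0)$, the differential $(dF_g)_{(u,\lambda,0)}$ is surjective.

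One subtlety I would be careful about is the regularity bookkeeping. Sard--Smale requires the Fredholm map to be $C^r$ with $r$ strictly larger than the index of the Fredholm map; here $\pi|_Z$ has index equal to $\dim \ker (dF_g)_{(u,\lambda,0)} - \dim \operatorname{coker}(dF_g)_{(u,\lambda,0)}$, which is bounded (it is $2 + \dim\ker(\Delta_{g,\alpha}-\lambda I) - \dim\ker(\Delta_{g,\alpha}-\lambda I) = 2$ when things are generic, but in any case finite and controllable), so smoothness of $F$ is more than enough. I would also note, following Uhlenbeck, that one should really work with a fixed Sobolev class for the metrics ($H^s$ with $s$ large) to keep $\Mcal^T$ a Banach manifold, and then recover the $C^\ell$ statement of Theorem~\ref{thm:main} afterward by a standard approximation/intersection argument (this happens outside the present proposition). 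The main obstacle — though it is not in this proposition but in the next step of the argument — is establishing the surjectivity hypothesis "$dF_x$ is surjective for all $x \in F^{-1}(0) \cap \pi^{-1}(\Ucal)$", which requires constructing explicit metric perturbations using the variational formulas of \S\ref{sec:variational}; here I am simply packaging the transversality machinery that converts that surjectivity into genericity of the fiberwise surjectivity.
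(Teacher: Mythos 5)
Your proposal is correct and follows essentially the same route as the paper: you use the implicit function theorem to realize $F^{-1}(0)$ over $\Ucal$ as a Banach submanifold, argue that the restricted projection $\pi|_Z$ is a Fredholm map whose regular values are exactly the metrics $g$ for which each $(dF_g)_{(u,\lambda,0)}$ is surjective, and then invoke the Sard--Smale theorem. The ``routine diagram chase'' you defer to Uhlenbeck is precisely what the paper spells out as its claims (a) and (b), via the identity $\ker(dF_x|_{\ker d\pi_x}) = \ker(dF_x)\cap\ker(d\pi_x) = \ker(d\pi_x|_{\ker dF_x})$ and the two-sided quotient isomorphisms; that you should make this chain explicit is the only substantive difference. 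One small inaccuracy: the kernel of $(dF_g)_{(u,\lambda,0)}$ is not ``$\ker(\Delta_{g,\alpha}-\lambda I)^\perp$-related'' --- since $\mu u + \nu u^*$ must lie in $\im(\Delta_{g,\alpha}-\lambda I) = \ker(\Delta_{g,\alpha}-\lambda I)^\perp$ while $u,u^*\in\ker$, one gets $\mu=\nu=0$, so the kernel is exactly $\ker(\Delta_{g,\alpha}-\lambda I)\times\{0\}$; the conclusion (finite-dimensionality) is unaffected. Your caution about the regularity required by Sard--Smale (needing $C^q$ with $q$ exceeding the Fredholm index) and the use of $H^s$ metric classes is a reasonable refinement that the paper leaves implicit.
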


\begin{proof}
Let $x\in F^{-1}(0) \cap \pi^{-1}(\Ucal)$.
By hypothesis, $dF_{x}$ is surjective, and hence the 
implicit function theorem\footnote{Note that since 
$H^{k}_{\alpha} \times \Rbb^2 \times \Scal^T$ 
is a Hilbert space, the kernel of $dF_x$ is complemented.} 
implies that there exists 
an open neighborhood $\Vcal_x \subset H^k_{\alpha}  \times \Rbb^2 \times \Mcal^T$ 
of $x$ such that $F^{-1}(0) \cap \Vcal_{x}$ is a
submanifold. Thus, there exists an open subset 
$\Wcal \subset H^k_{\alpha}  \times \Rbb^2  \times \Mcal^T$ 
containing $\Ucal$ so that $S:=F^{-1}(0) \cap \Wcal$ is a 
 submanifold of 
$H^k_{\alpha} \times \Rbb^2   \times \Mcal^T $. 
The tangent space to $S$ at
$x \in S$ is the kernel of $dF_x$.
The map $F_g$ is the restriction of $F$ to $\pi^{-1}(g)$ and
\begin{equation}
\label{eqn:diff-Fg}
(dF_g)_{(u, \lambda, 0)}(v,\mu, \nu)~
=~
(\Delta_{g, \alpha}- \lambda  I)\cdot v\, -\, \mu \cdot u-\nu \cdot u^*.
\end{equation}

Let $\pi^S:S \to \Mcal^T$ denote the restriction of $\pi$ to $S$.
We claim that if $x =(u, \lambda, 0, g)$ belongs to $S$, then 
\begin{enumerate}[label=(\alph*)]
\item  $d\pi^S_{x}$ is Fredholm, and

\item  
the differential $(dF_g)_{(u, \lambda,0)}$ is surjective 
   if and only if $(d\pi^S)_x$ is surjective.
\end{enumerate}
Indeed, let $x \in S$, and note that $(d\pi^S)_x$ is the restriction 
of $d\pi_x$ to $\ker(dF_x)$ and $(dF_g)_x$ is the restriction of $dF_x$
to $\ker(d\pi_x)$.
We have 
\begin{equation}
\label{eqn:ker=ker}
\ker(dF_x|_{\ker{d\pi_x}})~
=~
\ker(dF_x) \cap \ker(d \pi_x)~
=~
\ker(d\pi_x|_{\ker(dF_x)}).
\end{equation}
Because $\Delta_{g,\alpha}-\lambda I$ is self-adjoint, the 
operator $\Delta_{g, \alpha}-\lambda I$ is Fredholm with index equal to zero. 
It follows that $(dF_g)_x$ given by (\ref{eqn:diff-Fg}) is Fredholm.
Thus it follows from (\ref{eqn:ker=ker}) that $\ker((d\pi^S)_x)$ is finite.
Since $dF_x$  and $d\pi_x$ are both surjective, 
the following maps of quotient spaces are isomorphisms:
\begin{equation}
\label{eqn:quo-iso}
\frac{\Scal^T}{d \pi_x(\ker(dF_x))}
\longleftarrow \frac{ H_{\alpha}^k \times  \Rbb^2 \times
\Scal^T }{\left(\ker(dF_x)+ \ker(d\pi_x) \right)}
\longrightarrow \frac{H^{k-2}_{\alpha}}{dF_x(\ker(d\pi_x))}.
\end{equation}
Since $(dF_g)_x$ is Fredholm, the space 
on the right is finite dimensional. Thus, 
since $d\pi_x(\ker(dF_x))= (d\pi^S)_x(T_xS)$,
the latter has finite codimension in $\Scal^T$,
and therefore $(d\pi^S)_x$ is Fredholm and (a) is proven. 
Part (b) follows directly from the isomorphisms in (\ref{eqn:quo-iso}).

Part (a) implies that we may apply the Smale-Sard theorem 
to $\pi^S$ to obtain a residual set 
$\Rcal \subset \Ucal$ so that for each $g \in \Rcal$ and 
$x \in (\pi^S)^{-1}(g)$, the differential $(d\pi^S)_x$ is surjective.  
Part (b) then implies that $(dF_g)_x$ is surjective. 
\end{proof}

Let $u \in H^k_{\alpha}$ be an eigenfunction of $\Delta_{g, \alpha}$.  
Define $G_{u}: \Mcal^T \to H^{k-2}_{\alpha}$ 
by setting 
\begin{equation}
\label{eqn:phi}
G_u(g)~ 
=~
\Delta_{g,\alpha}u.
\end{equation}

\begin{prop} 
\label{prop:residual-formal}
Let $\Ucal$ be an open subset of $\Mcal^T$.
Suppose that for each $g \in \Ucal$ and each eigenfunction $u$ of  
$\Delta_{g, \alpha}$,  the subspace $(dG_u)(\Scal^T)$ 
is dense in the orthogonal complement of $u^*$. 
If $g \in \Ucal$ and $x \in F^{-1}(0) \cap \pi^{-1}(g)$,
then the differential $dF_x$ is surjective.
\end{prop}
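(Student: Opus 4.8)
The plan is to show that $dF_x$ is surjective by analyzing the image of $dF_x$ at a zero $x = (u,\lambda,0,g)$ and combining the $H^k_\alpha$-direction derivative with the metric-direction derivative. From \eqref{eqn:F-defn} we compute that for $(v,\mu,\nu,\dot g) \in H^k_\alpha \times \Rbb^2 \times \Scal^T$,
\[
dF_x(v,\mu,\nu,\dot g)~=~(\Delta_{g,\alpha}-\lambda I)v~-~\mu\cdot u~-~\nu\cdot u^*~+~(dG_u)_g(\dot g),
\]
since $\left.\partial_t\right|_{t=0}\Delta_{g_t,\alpha}u = (dG_u)_g(\dot g)$ when $g_t$ is a path through $g$ with $\dot g_0 = \dot g$. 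Setting $v=0$, $\mu=\nu=0$ shows $(dG_u)(\Scal^T) \subseteq \im(dF_x)$; setting $\dot g = 0$ recovers the earlier computation \eqref{eqn:diff-fg} showing $\im(\Delta_{g,\alpha}-\lambda I) + \Rbb\cdot u + \Rbb\cdot u^* \subseteq \im(dF_x)$.

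Next I would assemble these pieces. Because $\Delta_{g,\alpha}-\lambda I$ is self-adjoint and Fredholm of index zero, $H^{k-2}_\alpha = \im(\Delta_{g,\alpha}-\lambda I) \oplus \ker(\Delta_{g,\alpha}-\lambda I)$, so it suffices to show that the image of $dF_x$ surjects onto $\ker(\Delta_{g,\alpha}-\lambda I)$ modulo $\im(\Delta_{g,\alpha}-\lambda I)$. The vectors $u$ and $u^*$ already lie in $\ker(\Delta_{g,\alpha}-\lambda I)$, and we know $u^* \perp u$. By hypothesis, $(dG_u)(\Scal^T)$ is dense in $(u^*)^\perp$, which in particular contains the (finite-dimensional) subspace $\ker(\Delta_{g,\alpha}-\lambda I) \cap (u^*)^\perp$; density on a finite-dimensional target space is surjectivity, so $(dG_u)(\Scal^T)$ surjects onto $\ker(\Delta_{g,\alpha}-\lambda I)\cap(u^*)^\perp$ after composing with orthogonal projection onto that finite-dimensional space. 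Together with $\Rbb\cdot u^*$, this spans all of $\ker(\Delta_{g,\alpha}-\lambda I)$. Hence $\im(dF_x) + \im(\Delta_{g,\alpha}-\lambda I) = H^{k-2}_\alpha$, and since $\im(\Delta_{g,\alpha}-\lambda I) \subseteq \im(dF_x)$ already, we conclude $\im(dF_x) = H^{k-2}_\alpha$.

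The main subtlety to get right is the passage from ``dense in $(u^*)^\perp$'' to surjectivity onto the relevant kernel: one must project $(dG_u)(\Scal^T)$ orthogonally onto the finite-dimensional space $V := \ker(\Delta_{g,\alpha}-\lambda I)\cap(u^*)^\perp$ and observe that the image of a dense subspace of $(u^*)^\perp$ under the bounded projection $(u^*)^\perp \to V$ is dense, hence all of $V$ by finite-dimensionality. I would also note that the term $\mu\cdot u$, which lies in $V^\perp \cap (u^*)^\perp$ intersected with... more carefully, $u \in \ker(\Delta_{g,\alpha}-\lambda I)$, so adding $\Rbb\cdot u$ is harmless and in fact redundant here. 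A small point worth stating explicitly is that $(dG_u)_g(\dot g)$ is computed via the variational formula of Lemma \ref{lem:Lap-perturbation} applied componentwise to $u=(u_1,u_2)^t$, so that $G_u$ is indeed (affine-)differentiable on the open set $\Mcal^T \subseteq \Scal^T$ with the asserted derivative. With these observations in place, the proof is a short linear-algebra argument layered on top of the Fredholm structure of $\Delta_{g,\alpha}-\lambda I$.
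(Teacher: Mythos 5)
Your proof is correct and follows essentially the same route as the paper's: both arguments observe that $\im(dF_x)$ already contains the closed finite-codimensional subspace $\im(\Delta_{g,\alpha}-\lambda I)$ together with $u^*$ (coming from the $\nu$-slot), and then use the density of $(dG_u)_g(\Scal^T)$ in $(u^*)^\perp$, together with finite-dimensionality, to fill in the remaining piece. The only difference is organizational: you explicitly decompose $H^{k-2}_\alpha = \im(\Delta_{g,\alpha}-\lambda I)\oplus\ker(\Delta_{g,\alpha}-\lambda I)$ and project onto $V=\ker(\Delta_{g,\alpha}-\lambda I)\cap(u^*)^\perp$, whereas the paper first shows $\im(dF_x)$ is dense in all of $H^{k-2}_\alpha$ and then invokes an arbitrary finite-dimensional complement $Z^c$ of $\im(\Delta_{g,\alpha}-\lambda I)$.
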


\begin{proof}

Let $g \in \Ucal$ and $x \in F^{-1}(0) \cap \pi^{-1}(g)$. Then by \eqref{eqn:F_zero_locus}, there exists $u$ and $\lambda$ such that $x=(u, \lambda , 0, g)$ and $(\Delta_{g, \alpha}-\lambda I)u=0$. Moreover, from (\ref{eqn:F-defn}) we find that 
\begin{equation}
\label{eqn:dF}
dF_{x}(v, \mu, \nu, h)~
=~
(\Delta_{g,\alpha}-\lambda)v~
+~
(dG_u)_g(h)~
-~ 
\mu \cdot u -\nu \cdot u^*.
\end{equation}
In particular, $dF_{x}(0, 0,h) = (dG_u)_g(h)$, and so the
image of  $(dG_u)_g$ is contained in the image of $dF_{x}$. 
Therefore, the hypothesis implies that the image of $dF_{x}$ is dense
in $(u^*)^{\perp}$. 
Since $dF_{x}(0,0,-1, 0)= u^*$, we have 
$u^* \in \im(dF_x)$, and therefore the image of $dF_{x}$
is dense in $H^{k}_{\alpha}$.

Since $(dF_g)_{(u, \lambda)}$ is Fredholm, the 
space 
\begin{equation}
\label{eqn:Z}
Z~ 
:=~ 
dF_{x}  \left( H^k_{\alpha}\times \{0\} \times \{0\} \right)~
=~
(dF_g)_{(u, \lambda,0)} \left( H^k_{\alpha} \times \{0\} \right)
\end{equation}
has finite codimension in $H^k_{\alpha}$. In particular, there exists a
finite dimensnional subspace $Z^c \subset H^k_{\alpha}$ so that 
$Z \oplus Z^c = H^k_{\alpha}$.  
Because the image of $dF_x$ is dense in $H^k_{\alpha}$,
the projection of $\im(dF_x)$  onto $Z^c$ is dense in $Z^c$.
But $Z^c$ is finite dimensional, 
and so the projection of $\im(dF_x)$ equals $Z^c$. 
On the other hand, by (\ref{eqn:Z}), 
the image of $dF_x$ contains $Z$.
It follows that $\im(dF_x)\supset Z \oplus Z^c = H^k_{\alpha}$.
\end{proof}

So far, our exposition of Uhlenbeck's method has been somewhat general
in the sense that we have not yet used any specific properties of torus
invariant metrics. To verify that the image of $(dG_u)_x$ is dense in the orthogonal complement of $u^{*}$,
we will need to use properties of these metrics.

For each $g \in  \Mcal^T$ and $\alpha \in \Zbb^d$, 
define $b_{g,\alpha}: M \to \Rbb$ by
\begin{equation}
\label{eqn:b-m-alpha}
b_{g,\alpha}(x)~
:=~
\frac{n+2}{d}~\sum_{i,j=1}^d\,  g^{ij}(x) \cdot \alpha_i \cdot \alpha_j.
\end{equation}
Note that $b_{g,\alpha}$ is $T$-invariant.
Define $\Ucal_{\alpha}$ to 
be the set of metrics $g \in  \Mcal^T$ such that $b_{g,\alpha}$ is 
nonconstant on each open subset of $M$. The set $\Ucal_{\alpha}$ 
is open and dense in $\Mcal^T$. 

\begin{prop}
\label{prop:DPhi2-dense}
If $g \in \Ucal_{\alpha}$ and $u$ is an eigenfunction 
of $\Delta_{g, \alpha}$, then $(dG_u)_{g}(\Scal^T)$ is dense in 
the orthogonal complement of $u^{*}$.
\end{prop}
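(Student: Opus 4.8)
The plan is to compute $(dG_u)_g$ explicitly using the variational formula of Lemma~\ref{lem:variation}, restrict to perturbations of the special form $g_t = g_{a_t,b_t}$ (rescaling along $\Vcal$ and $\Hcal$ with the constraint $d\cdot\dot a + (n-d-2)\cdot\dot b = 0$), and then argue by duality: if $(dG_u)_g(\Scal^T)$ were not dense in $(u^*)^\perp$, there would be a nonzero $v \in (u^*)^\perp$ orthogonal to the image, and we will show this forces a unique-continuation-type contradiction using that $b_{g,\alpha}$ is nonconstant on every open set.

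First I would reduce to the subfamily of metrics coming from Lemma~\ref{lem:variation}: for an arbitrary $T$-invariant $\dot b \in C^\infty(M/T)$ (extended to be $T$-invariant on $M$), set $\dot a = -\frac{n-d-2}{d}\dot b$, so the formula reads
\[
\int \big(\dot\Delta u\big)\, v \, d\nu_0 \;=\; -\int \dot b\,(\Delta_0 u)\,v\, d\nu_0 \;+\; \frac{n-2}{d}\int \dot b\; g\big(\nabla^\Vcal u, \nabla^\Vcal v\big)\, d\nu_0 .
\]
Since $u = (u_1,u_2)^t \in H^k_\alpha$, equation~(\ref{eqn:vert-derivative}) gives $\partial_j u = \alpha_j J u$ with $J = \left(\begin{smallmatrix}0&1\\-1&0\end{smallmatrix}\right)$, hence $g(\nabla^\Vcal u, \nabla^\Vcal u) = \big(\sum_{i,j} g^{ij}\alpha_i\alpha_j\big)\, |u|^2$ componentwise — this is where $b_{g,\alpha}$ enters. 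So $(dG_u)_g$ applied to such a perturbation is, up to constants, multiplication of $u$ by the scalar function $-\dot b \cdot \lambda + \frac{n-2}{d}\dot b \cdot \big(\sum g^{ij}\alpha_i\alpha_j\big)$, i.e.\ $\dot b \cdot (c_1 \cdot b_{g,\alpha} + c_2)$ for explicit constants $c_1 \neq 0$, $c_2$ depending on $g,\lambda,n,d$. (Here I would also throw in the mixed perturbations of Lemma~\ref{lem:mixed-perturbation} — of the form $g + \xi\otimes\eta + \eta\otimes\xi$ with $X \in \Hcal$, $Y \in \Vcal$ — which contribute terms $-(Xu)(Yv) - (Yu)(Xv)$; since $Yu = (\text{combination of }\alpha_j) Ju$, these give additional density in directions "rotating" $u$, which should let me upgrade from density in $\Rbb\cdot u$-multiples to density transverse to the obvious obstructions.)

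Now I would set up the duality argument. Suppose $v \in H^{k-2}_\alpha$ (really in the $L^2$-closure), $v \perp u^*$, and $\int \langle (dG_u)_g(h), v\rangle\, d\nu_0 = 0$ for all $h \in \Scal^T$. Testing against the rescaling perturbations above yields
\[
\int_{M} \dot b(x)\,\big(c_1 b_{g,\alpha}(x) + c_2\big)\,\langle u(x), v(x)\rangle\, d\nu_0(x) = 0 \qquad \text{for all }T\text{-invariant }\dot b,
\]
which forces $(c_1 b_{g,\alpha} + c_2)\cdot \big(\int_{T\cdot x}\langle u,v\rangle\big) = 0$ for a.e.\ orbit. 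Testing against the mixed perturbations kills the remaining "orthogonal" component, so that pairing $\langle u, v\rangle$ (or an appropriate $T$-average of it) must be supported on the set where $c_1 b_{g,\alpha} + c_2 = 0$. Since $g \in \Ucal_\alpha$, the function $b_{g,\alpha}$ is nonconstant on every open subset of $M$, so $\{c_1 b_{g,\alpha} + c_2 = 0\}$ has empty interior; hence $\langle u, v\rangle \equiv 0$, and combined with the constraint from the mixed perturbations one gets that $v$ is pointwise orthogonal (in $\Rbb^2$) to $u$ at a.e.\ point. But the only section of $\Rbb^2$ pointwise orthogonal to $u$ that lies in $H^k_\alpha$ is a multiple of $u^*$ — this is where the twisting condition (\ref{eqn:twist}) is essential, since it pins down the "angular" dependence of any element of $H^k_\alpha$ along each orbit. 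Because $v \perp u^*$ was assumed, $v = 0$, proving density.

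The main obstacle I anticipate is the last step: showing that the only $H^k_\alpha$-section pointwise-$\Rbb^2$-orthogonal to the nowhere-vanishing-on-a-full-measure-set eigenfunction $u$ is $\Rbb\cdot u^*$. One has to be careful because $u$ can vanish on the nodal set, and near the nodal set "pointwise orthogonal" does not immediately determine $v$; the fix is to work on the open dense set $\{u \neq 0\}$, write $v = f\cdot u + h\cdot u^*$ there with $f,h$ functions, use $\langle u,v\rangle = 0$ to get $f \equiv 0$ on $\{u\neq 0\}$, and then invoke that $h\cdot u^* \in H^k_\alpha$ together with the twisting relation to conclude $h$ is (the lift of) a function on $M/T$, after which $v \perp u^*$ globally forces $h = 0$. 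A secondary technical point is justifying that the span of the rescaling and mixed perturbations I use actually lies in $\Scal^T$ (it does, by $T$-invariance of $a,b$ and of the bundles $\Vcal,\Hcal$) and that the pairings above make sense after passing to the Sobolev completion — standard elliptic regularity for eigenfunctions handles this.
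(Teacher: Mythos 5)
The overall structure you chose is the same as the paper's (test a putative orthogonal complement element $v$ against the two families of perturbations from Lemmas~\ref{lem:variation} and~\ref{lem:mixed-perturbation}), and the first half is right: the rescaling perturbations with $T$-invariant $f=\dot b$ give $\int f\,(u_1v_1+u_2v_2)(b_{g,\alpha}-\lambda)\,d\nu_g=0$, the definition of $\Ucal_\alpha$ kills the factor $b_{g,\alpha}-\lambda$, and (using elliptic regularity to get continuity) one concludes $u_1v_1+u_2v_2\equiv 0$, hence on $\{u\neq0\}$ one may write $v=h\cdot u^*$ with $h$ a $T$-invariant function by the twisting relation. Up to this point you match the paper.

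The gap is at the final step. Your closing claim --- ``after which $v\perp u^*$ globally forces $h=0$'' --- is false as stated. Knowing $v=h\cdot u^*$ and $v\perp u^*$ in $L^2$ only gives $\int_M h\,(u_1^2+u_2^2)\,d\nu_g=0$, which does \emph{not} force a $T$-invariant function $h$ to vanish (it can change sign). The twisting relation pins $h$ down to being $T$-invariant, not to being constant, so the space $\{h\cdot u^*: h\ T\text{-invariant}\}$ of sections pointwise orthogonal to $u$ is infinite dimensional and the argument as written proves nothing past the rescaling step. This is precisely where the mixed perturbations are actually needed, and you gesture at them (``these give additional density in directions rotating $u$'') without carrying out the computation. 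The paper's proof plugs $X$ a $T$-invariant horizontal vector field and $Y=\partial_j$ into Lemma~\ref{lem:mixed-perturbation}, which after the substitution $v=h\,u^*$ reduces to $\alpha_j\int_M (u_1^2+u_2^2)\,X(h)\,d\nu_g=0$; since $u_1^2+u_2^2$ is $T$-invariant and nonnegative with full-measure support and $X$ ranges over all $T$-invariant horizontal fields, this forces $X(h)\equiv 0$ for all such $X$, and together with $T$-invariance and connectedness of $M$ one concludes $h$ is constant. Only then does $\int h\,(u_1^2+u_2^2)=0$ give $h=0$. You need to make the mixed-perturbation step an actual computation, not a remark; without it your proof does not close.
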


\begin{proof}
To prove the proposition, it suffices to show that $(dG_u)_{g}(\Scal^T)^{\perp}$ is contained in $\mathrm{span}(u^{*})$.

Let $u=(u_1, u_2)^t$ be an eigenfunction of $\Delta_{g, \alpha}$ and $v=(v_1, v_2)^t\in (dG_u)_{g}(\Scal^T)^{\perp}$. We use two pertubation formulae to show that $v$ is a constant multiple of $u^{*}$.

First consider the perturbation defined in Subsection \ref{Subsection:MetricTorusInvariant}. Let $\{\partial_i\}$ be the 
the natural frame for $\Vcal$ defined in (\ref{eqn:partial_j}). 
If $g_{jk}:= g(\partial_j, \partial_k)$ is the Gram matrix associated to 
this frame, then 
$\nabla^\Vcal_g \xi= \sum_{j,k} g^{jk} \cdot \partial_i \xi \cdot \partial_j$.
In particular, for each $u=(u_1, u_2)^{t}, v=(v_1, v_2)^{t} \in H^{k-2}_{\alpha}$, we find that 
\begin{equation}
\label{eqn:eval-weight1}
g\left( \nabla^\Vcal u_1, \nabla^\Vcal v_1 \right)~
=~
\sum_{jk} g^{jk} \cdot \partial_j u_1 \cdot \partial_k v_1~
=~
u_2 \cdot v_2 \cdot \sum_{jk} g^{jk} \cdot \alpha_j \cdot \alpha_k
\end{equation}
and 
\begin{equation}
\label{eqn:eval-weight2}
g\left( \nabla^\Vcal u_2, \nabla^\Vcal v_2 \right)~
=~
\sum_{jk} g^{jk} \cdot \partial_j u_2 \cdot \partial_k v_2~
=~
u_1 \cdot v_1 \cdot \sum_{jk} g^{jk} \cdot \alpha_j \cdot \alpha_k.
\end{equation}

Recall the construction of the analytic path $g_{a_t, b_t}$ 
in Subsection \ref{Subsection:MetricTorusInvariant}. For each choice 
of smooth $T$-invariant function $f:M \to \Rbb$, 
we set $b_t= 1+ t \cdot f$ and $a_t:= 1 - \frac{n-d-2}{d} \cdot t \cdot f$
and define $g_t := g_{a_t, b_t}$. Then $d \cdot \dot{a} + (n-d-2)f=0$
and so using (\ref{eqn:eval-weight1}) and (\ref{eqn:eval-weight2}) we find that 
Lemma \ref{lem:variation} implies that for each 
$v \in H^{k-2}_{\alpha}$, we have 
\[
\int_M \dot{\Delta} u \cdot v\, d \nu_g ~
=~
2 \int_M
f \cdot (u_1v_1+u_2v_2) 
\cdot 
\left( 
b_{g, \alpha}~
-~
\lambda \right)\, d \nu_g
\]
Note that $\dot{\Delta}u= (dG_u)_{g}(\dot{g})$ and so for each $(v_1, v_2)^{t}\in (dG_u)_{g}(\Scal^T)^{\perp}$ and each $T$-invariant function $f: M \to \Rbb$,
\begin{equation}
\label{eqn:Delta-dot-applied}
0=\int_M \left[ (dG_u)_{g}(\dot{g})\right]\cdot v\, d \nu_g ~
=~
2 \int_M
f \cdot (u_1v_1+u_2v_2) 
\cdot 
\left( 
b_{g, \alpha}~
-~
\lambda \right)\, d \nu_g.
\end{equation}
Since $(u_1v_1+u_2v_2) 
\cdot 
\left( 
b_{g, \alpha}~
-~
\lambda \right)$ is $T$-invariant, it must vanish on $M$ almost everywhere.
By the definition of $\Ucal_{\alpha}$, the zero locus
of the smooth function $b_{g,\alpha} - \lambda$ has measure zero. 
Therefore, $u_1v_1+u_2v_2$ vanishes on a set of full measure. 

 Take $k$ in $H_{\alpha}^{k-2}$ to be large enough such that 
 $v_1$ and $v_2$ are in $C^1$. Then $u_1v_1+u_2v_2=0$ on the entire manifold.
On points such that $u_2$ is nonzero, define
$h:=-v_1/u_2$. Then $h$ is a $C^1$, invariant function away 
from the zero locus of $u_2$ and $(v_1, v_2)=h\cdot(-u_2, u_1)=h\cdot u^{*}$. 

Now we use another perturbation formula to show that $h$ is constant.
Let $X$ be a vector field that is invariant under the torus action
and is orthogonal to each $T$ orbit.
Let $Y=\partial_j$ be the vector 
field defined in \eqref{eqn:partial_j}. The corresponding perturbation 
$g_{tX, tY}$ that is defined in Lemma \ref{lem:mixed-perturbation} 
is $T$-invariant for each $t$ close to zero.

By \eqref{eqn:mixed-perturbation}, for each 
$v=(v_1, v_2)^t\in (dG_u)_{g}(\Scal^T)^{\perp}$ we have
\begin{equation}
\label{eqn:mix-perturbation-h}
\begin{split}
0&=\int_M \left[ (dG_u)_{g}(\dot{g})\right]\cdot v\, d \nu_g=
\int_M \dot{\Delta} u_1 \cdot v_1+\dot{\Delta} u_2 \cdot v_2\, d\nu_0\\
&=
-\sum_{i=1}^2\int_M
\left(
(Xu_i) \cdot (Y v_i)~ +~ (Y u_i)  \cdot (X v_i)
\right)\, 
d\nu_0.
\end{split}
\end{equation}
Since $(v_1, v_2)=h\cdot(-u_2, u_1)$,  
\eqref{eqn:mix-perturbation-h} can be simplified using 
(\ref{eqn:vert-derivative}) to 
\[
0~
=~
\alpha_j \int_{M} (u_1^2+u_2^2) \cdot X(h) \, 
d\nu_0. 
\]
Since $\alpha_j \neq 0$ for some $j$, the integral equals zero. 
Moreover, observe that both $(u_1^2+u_2^2)$ and $X(h)$ are $T$-invariant
and $(u_1^2+u_2^2)$ vanishes on a set of measure zero. 
Because the vector field $X$ is an arbitrary $T$-invariant vector 
field that is orthogonal to the orbits of $T$, the $T$-invariant 
function $h$ is a constant function.

Therefore, $(dG_u)_{g}(\Scal^T)^{\perp}$ is 
contained in the span of $u^{*}$ and the proposition is proved.
\end{proof}

\begin{thm}
\label{thm:simple-on-weights}
Let $\alpha \neq 0$.
There exists a residual subset $\Ycal_{\alpha}$ of $\Mcal^T$ so that 
for each $g \in \Ycal_{\alpha}$,
each eigenspace of $\Delta_{g, \alpha}-\lambda I$ is
two dimensional.
\end{thm}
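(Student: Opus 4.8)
The plan is to combine the work already assembled in this section into a single application of the Smale-Sard machinery. First I would fix $\alpha \neq 0$ and let $\Ucal_{\alpha} \subset \Mcal^T$ be the open dense set of metrics for which $b_{g,\alpha}$ is nonconstant on every open subset of $M$, as defined above. By Proposition \ref{prop:DPhi2-dense}, for every $g \in \Ucal_{\alpha}$ and every eigenfunction $u$ of $\Delta_{g,\alpha}$, the image $(dG_u)_g(\Scal^T)$ is dense in $(u^*)^{\perp}$. Proposition \ref{prop:residual-formal} then upgrades this to the statement that $dF_x$ is surjective for every $x \in F^{-1}(0) \cap \pi^{-1}(\Ucal_{\alpha})$.

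Next I would feed this into Proposition \ref{prop:parametric} with the open set $\Ucal = \Ucal_{\alpha}$: since $dF_x$ is surjective at every point of $F^{-1}(0)$ lying over $\Ucal_{\alpha}$, there is a residual subset $\Rcal_{\alpha} \subset \Ucal_{\alpha}$ such that for each $g \in \Rcal_{\alpha}$ and each $(u,\lambda,0) \in F_g^{-1}(0)$, the differential $(dF_g)_{(u,\lambda,0)}$ is surjective. Finally, Lemma \ref{lem:simplicity} translates surjectivity of $(dF_g)_{(u,\lambda,0)}$ into the assertion that $\dim \ker(\Delta_{g,\alpha} - \lambda I) = 2$. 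Applying this to every nonzero eigenfunction $u$ (equivalently, to every eigenvalue $\lambda$ of $\Delta_{g,\alpha}$, which is discrete) shows that for $g \in \Rcal_{\alpha}$ every eigenspace of $\Delta_{g,\alpha}$ is exactly two dimensional; recall from the start of the section that every such eigenspace has dimension at least two, so we really only need the upper bound. Set $\Ycal_{\alpha} := \Rcal_{\alpha}$.

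The one point that needs a small remark rather than being completely routine is the passage from ``residual in $\Ucal_{\alpha}$'' to ``residual in $\Mcal^T$'': since $\Ucal_{\alpha}$ is open and dense in the Baire space $\Mcal^T$, a residual subset of $\Ucal_{\alpha}$ is again residual in $\Mcal^T$, so $\Ycal_{\alpha}$ is residual in $\Mcal^T$ as claimed. I do not anticipate a genuine obstacle here; the substantive analytic content has already been discharged in Propositions \ref{prop:parametric}, \ref{prop:residual-formal}, and \ref{prop:DPhi2-dense}, and the present theorem is the bookkeeping step that chains them together. If anything, the only care required is to make sure the Fredholmness hypothesis needed for Smale-Sard in Proposition \ref{prop:parametric} is met, but that was verified there using self-adjointness of $\Delta_{g,\alpha} - \lambda I$, so nothing new is required.
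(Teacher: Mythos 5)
Your proof is correct and follows exactly the paper's route: the paper's proof of Theorem \ref{thm:simple-on-weights} is literally ``Combine Lemma \ref{lem:simplicity}, Proposition \ref{prop:parametric}, Proposition \ref{prop:residual-formal}, and Proposition \ref{prop:DPhi2-dense},'' and you have spelled out that chain in the intended order, including the (correct, and worth stating) observation that a residual subset of the open dense set $\Ucal_{\alpha}$ is residual in $\Mcal^T$.
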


\begin{proof}
Combine Lemma \ref{lem:simplicity},
Proposition \ref{prop:parametric}, Proposition~\ref{prop:residual-formal},
and Proposition~\ref{prop:DPhi2-dense}. 
\end{proof}

Now we have enough to prove the first part of Theorem \ref{thm:main}.

\begin{proof}[Proof of part (1) of Theorem \ref{thm:main}]

By Theorem \ref{thm:perturb}, there exists a residual subset 
of $\Mcal^T$ such that for each $g$ in this set we have  
$\spec(\Delta_{\alpha}) \cap \spec(\Delta_{\beta})= \emptyset$
unless $\alpha = \pm \beta$. If $\alpha =\pm \beta$,
then $H^k_{\alpha}=H^k_{\beta}$ and so the eigenspaces coincide.

If $\alpha \neq 0$,
then by Theorem \ref{thm:simple-on-weights}, there exists a residual set $\Ycal_{\alpha}$ such that the eigenspaces of $\Delta_{g,\alpha}$ are two-dimensional.
If $\alpha=0$, then as in the proof of Proposition \ref{prop-Weyl},
one may identify $H^k_0$ with the Sobolev space of 
$H^k$ functions on the quotient $M/T$. The operator
$\Delta_{g,0}$ corresponds to a nonnegative second order elliptic 
operator on $M/T$, and it follows from Theorem 8 in \cite{Uhlenbeck} that 
for a residual set of $g$, this operator has simple spectrum.

Since an intersection of residual sets is residual, the claim follows.
\end{proof}


\section{Nodal sets}
\label{sec:nodal-sets}

In this section we prove part (2) of Theorem \ref{thm:main}
and we prove Theorem \ref{thm:nodal=2}.

\begin{prop}
\label{prop:nodal-complex}
Let $\alpha \in \Zbb^d \setminus \{0\}$.
There exists a residual subset of $\Mcal^{T}$ so that
if $g$ belongs to this set, then $0 \in \Rbb^2$ 
is a regular value of each $u \in \ker(\Delta_{g, \alpha}- \lambda I)$.
\end{prop}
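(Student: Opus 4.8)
The strategy is to run Uhlenbeck's parametric transversality argument one more time, now with the value $0 \in \Rbb^2$ in place of the eigenvalue pair. Fix $\alpha \neq 0$. The statement that $0$ is a regular value of an eigenfunction $u = (u_1,u_2)^t$ means that the differential $d u_p : T_pM \to \Rbb^2$ is surjective at every point $p$ where $u(p) = 0$. Equivalently, there is no zero $p$ of $u$ at which $du_p$ has rank $\leq 1$, i.e. at which there is a nonzero covector $w = (w_1,w_2) \in \Rbb^2$ with $w_1\, du_1|_p + w_2\, du_2|_p = 0$. So I would introduce the map, defined on the bundle of such ``bad'' configurations, whose zero set we want to show is generically empty, and apply parametric transversality together with a dimension count: the parameter is the metric $g$, the total space of bad configurations over $F^{-1}(0)$ has codimension equal to (roughly) $1 + 2 + (n-1) = n+2$ in an appropriate sense — one condition from $u(p)=0$ being a single point-value in $\Rbb^2$ is really two, the rank-drop of a $2\times n$ matrix is codimension $n-1$, and $p$ varies in $M$ of dimension $n$ — so the expected dimension of the bad set is $n - (n+1) < 0$, and hence for generic $g$ it is empty.

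\textbf{Key steps, in order.} First, I would set up the relevant evaluation map: over the submanifold $S = F^{-1}(0) \cap \Wcal$ of Proposition~\ref{prop:parametric}, whose points are $(u,\lambda,0,g)$ with $u$ an eigenfunction, form the auxiliary map sending a point $(u,\lambda,0,g)$ together with a point $p \in M$ and a unit covector $w \in S^1 \subset \Rbb^2$ to the pair $\bigl(u(p),\, w_1 du_1|_p + w_2 du_2|_p\bigr) \in \Rbb^2 \times T_p^*M$. Call this $\Psi$. The target has fiber dimension $2 + n$. Second, I would show $\Psi$ is a submersion at every point of $\Psi^{-1}(0)$ where the metric $g$ is allowed to vary: the key computation is that perturbing $g$ allows us to perturb the eigenfunction $u$ (via the implicit function theorem applied to $F$, whose differential $dF_x$ is surjective for generic $g$ by Proposition~\ref{prop:residual-formal}, assuming $g \in \Ucal_\alpha$), and a tangent vector $v$ to the eigenfunction with $v(p)$ and $w_1 dv_1|_p + w_2 dv_2|_p$ prescribed arbitrarily exists — this is a local solvability/unique-continuation style statement for which one uses that $u \not\equiv 0$ and that eigenfunctions of an elliptic operator vanish to finite order, so the first jet of $v$ at $p$ is unconstrained within the eigenfunction-deformation family. (Here I would lean on exactly the same kind of argument Uhlenbeck uses in \cite{Uhlenbeck} for showing $0$ is a regular value — the point-evaluation of the $1$-jet of a variation is surjective because we may add to $v$ any function in the range of $dF_x$ that has prescribed $1$-jet at $p$.) Third, with $\Psi$ a submersion along $\Psi^{-1}(0)$, parametric transversality (Smale--Sard, as in the proof of Proposition~\ref{prop:parametric}) gives a residual set of metrics $g \in \Ucal_\alpha$ for which the restricted map $(u,p,w) \mapsto \Psi(u,\lambda,0,g;p,w)$ is transverse to $0$; since the source has dimension $(\dim \text{eigendata}) + n + 1$ and we are cutting out $n+2$ conditions, transversality forces $\Psi^{-1}(0) = \emptyset$ in each fixed eigenspace direction — more carefully, for fixed $g$ the set of $(u,p,w)$ with $\Psi = 0$ is a manifold of dimension $n+1 - (n+2) = -1$, hence empty, so no eigenfunction has a zero with rank-$\leq 1$ differential. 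Finally, intersect over the countably many $\alpha$ (and recall $\Ucal_\alpha$ is open dense, so working inside it costs nothing) to get a single residual set.

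\textbf{The main obstacle.} The crux is Step two: verifying that at a zero $p$ of an eigenfunction $u$, the evaluation of the $1$-jet $v \mapsto (v(p), dv|_p)$ restricted to the space of admissible deformations $v$ (those arising as $v = (\Delta_{g,\alpha} - \lambda)^{-1}(\text{something}) $ modulo the kernel, i.e. the part of $\ker dF_x$ projecting onto metric variations) is surjective onto $\Rbb^2 \oplus T_p^*M$. This requires showing that the range of $dG_u$ — shifting the metric — together with the freedom to move within the eigenspace, is rich enough to realize an arbitrary first-order Taylor datum at the single point $p$; the honest way is to observe that $(dF_g)_x = \Delta_{g,\alpha} - \lambda$ composed with the obvious stuff has closed range of finite codimension, that its range contains all smooth functions orthogonal to the (at most two-dimensional) kernel, and that point-evaluation of the $1$-jet is a finite-rank functional, so one can always correct by an element of the range — this is precisely the maneuver in Lemma~2.3 and Theorem~2 of \cite{Uhlenbeck}. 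One must also be slightly careful that $S^1$ (the covector direction $w$) is compact so the ``bad set'' is genuinely a manifold with the stated dimension and the Baire argument over countably many $\alpha$ closes.
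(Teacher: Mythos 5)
Your plan takes a genuinely different — and strictly harder — route than the paper's. The paper applies parametric transversality directly to the plain evaluation map $f(u,\lambda,g,x) = u(x) \in \Rbb^2$, restricted to $Q\times M$ where $Q$ is the (open) locus in $F^{-1}(0)$ of eigenpairs with exactly two-dimensional kernel. Because the target $\Rbb^2$ is finite-dimensional of dimension $2 \le n$, transversality of the restriction $f_q = u$ to $\{0\}$ \emph{is} the statement that $0$ is a regular value; there is no need for a covector $w$, a rank-drop locus, or a dimension count. Moreover, the surjectivity of $df$ at a zero is established using only the $0$-jet: if $a\in\Rbb^2$ annihilated the image of $df$, then taking $z=0$ gives $a\cdot v(x)=0$ for all $v$ in the admissible-deformation space $E$, hence for all $v\in V=\ker(\Delta_{g,\alpha}-\lambda)^\perp$ by the density proved in Lemma~\ref{lem:E-dense}; since $u(x)=u^*(x)=0$ this extends to all of $H^k_\alpha$, and the $SO(2)$-rotation trick forces $a=0$.

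Your reformulation via the ``bad set'' $\{(u,p,w): u(p)=0,\ w_1\,du_1|_p+w_2\,du_2|_p=0\}$ requires considerably more than the paper delivers, and the gaps are real. Your Step 2 needs the admissible deformations to realize an arbitrary $1$-jet $(v(p), dv|_p)$ at $p$ (at least in the $w$-direction) — a strictly stronger claim than the $0$-jet density of Lemma~\ref{lem:E-dense}. You gesture at this via unique continuation and finite-order vanishing, but it is not proved, and it does not follow automatically: density of $E$ in $V$ (in $H^k$) gives density of $1$-jets of $E$ among $1$-jets of $V$, yet you still must show the $1$-jets of $V\oplus\ker$ at $p$ fill out $\Rbb^2\oplus\Hom(T_pM,\Rbb^2)$. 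Additionally, $\Psi$ takes values in a bundle ($\Rbb^2\times T^*M$) rather than a fixed Banach space, so Smale--Sard needs local trivializations or a bundle-valued transversality lemma; and the dimension count ``$n+1$ versus $n+2$, hence $-1$'' is only correct after quotienting the eigendata by scale and rotation, which you mention but don't carry out. All of these are in principle fixable, but the extra machinery is avoidable: the paper's $0$-jet argument already closes the proof.
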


The idea of the proof originates in \cite{Uhlenbeck}, and 
an analogue appears as Proposition 4.8 in \cite{Jung-Zelditch}.
The proof is based on an analysis of the set $Q$ consisting of 
$(u, \lambda, g) \in H^2_{\alpha} \times \Rbb \times \Mcal^T$
 such that the dimension of 
$\ker(\Delta_{g, \alpha}-\lambda I)$  equals two. 
By the discussion surrounding (\ref{eqn:F_zero_locus}),
if $(u,\lambda, \lambda', g)$ lies in the submanifold 
$F^{-1}(0) \subset H^2_{\alpha} \times \Rbb \times \Mcal^T$
then $\lambda'=0$, and hence $Q$ is naturally embedded in $F^{-1}(0)$.  
For each $g \in \Mcal^T$, 
each eigenspace of $\Delta_{g,\alpha}$
has dimension at least two, and so
the continuity of the spectrum implies that the 
embedding of $Q$ is open in $F^{-1}(0)$.

To prove Proposition \ref{prop:nodal-complex} we will
use the following lemma. 

\begin{lem}
\label{lem:E-dense}
Let $(u, \lambda, g) \in Q$,
and let $E$ be the subspace of $ \ker(\Delta_{g,\alpha} -\lambda I)^{\perp}$ 
consisting $v$ such that there exists
$(\mu,h) \in \Rbb \times \Tcal^T$ so that 
$(v, \mu, h)$ lies in the tangent space $T_{(u, \lambda, g)} Q$.
Then $E$ is dense in $ \ker(\Delta_{g,\alpha} -\lambda I)^{\perp}$.
\end{lem}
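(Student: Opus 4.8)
The plan is to recognize $E$ as the image of the linearization of the projection $\pi^{Q} : Q \to \Mcal^T$ in the ``eigenfunction direction'', and then to argue density by a functional-analytic duality: if $v_0 \in \ker(\Delta_{g,\alpha}-\lambda I)^{\perp}$ is orthogonal to $E$, we must show $v_0 = 0$. First I would unwind the definition of $T_{(u,\lambda,g)}Q$. Since $Q$ is open in the submanifold $F^{-1}(0)$, a tangent vector $(v,\mu,\nu,h)$ lies in $T_{(u,\lambda,g)}Q$ precisely when $dF_{x}(v,\mu,\nu,h) = 0$, i.e. by \eqref{eqn:dF},
\begin{equation}
\label{eqn:E-tangent-condition}
(\Delta_{g,\alpha}-\lambda I)v + (dG_u)_g(h) - \mu \cdot u - \nu \cdot u^* = 0,
\end{equation}
and (because we are on $Q$, where the kernel is exactly $2$-dimensional and $\lambda$ is locally a smooth simple-in-the-$H_\alpha$-sense eigenvalue branch) the $\lambda$-component is automatically $\nu = 0$; see the discussion after \eqref{eqn:F_zero_locus}. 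So $E$ consists of all $v \in \ker(\Delta_{g,\alpha}-\lambda I)^{\perp}$ for which there exist $\mu \in \Rbb$ and $h \in \Tcal^T$ with $(\Delta_{g,\alpha}-\lambda I)v = \mu u - (dG_u)_g(h)$.

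Next I would set up the duality. Suppose $v_0 \in \ker(\Delta_{g,\alpha}-\lambda I)^{\perp}$ satisfies $\langle v_0, v\rangle = 0$ for every $v \in E$. Fix an arbitrary $h \in \Tcal^T$. Using Fredholmness of $\Delta_{g,\alpha}-\lambda I$ with index zero, decompose $(dG_u)_g(h) = w + c_1 u + c_2 u^*$ with $w \in \im(\Delta_{g,\alpha}-\lambda I) = \ker(\Delta_{g,\alpha}-\lambda I)^{\perp}$; one can solve $(\Delta_{g,\alpha}-\lambda I)v = -w$ for some $v \in \ker(\Delta_{g,\alpha}-\lambda I)^{\perp}$, and then taking $\mu = 0$ shows this $v$ lies in $E$ — but I have to be slightly careful, because the relation defining $E$ allows the right-hand side $\mu u - (dG_u)_g(h)$ to have components along $u$ and $u^*$ only in a controlled way. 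The cleanest route: for each $h$, project $(dG_u)_g(h)$ onto $\ker(\Delta_{g,\alpha}-\lambda I)^{\perp}$, call it $P^{\perp}(dG_u)_g(h)$; then the $v$ solving $(\Delta_{g,\alpha}-\lambda I)v = -P^{\perp}(dG_u)_g(h)$ lies in $E$ (taking $\mu$ and $\nu$ to absorb the $u,u^*$ components is legitimate since $\nu=0$ is forced only by being in $F^{-1}(0)$, and here we genuinely have $(v,\mu,\nu,h) \in \ker dF_x$ with $\nu = $ the $u^*$-coefficient, which is zero because $(dG_u)_g$ has image orthogonal to $u^*$ — this is exactly where Proposition~\ref{prop:DPhi2-dense} enters!). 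So $\langle v_0, v\rangle = 0$ becomes $\langle v_0, (\Delta_{g,\alpha}-\lambda I)^{-1} P^{\perp}(dG_u)_g(h)\rangle = 0$; moving the self-adjoint operator over, $\langle (\Delta_{g,\alpha}-\lambda I)^{-1}v_0, P^{\perp}(dG_u)_g(h)\rangle = 0$, and since $v_0 \perp u, u^*$ this is $\langle (\Delta_{g,\alpha}-\lambda I)^{-1}v_0, (dG_u)_g(h)\rangle = 0$ for all $h \in \Tcal^T$.

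From here I would invoke Proposition~\ref{prop:DPhi2-dense}: the set $(dG_u)_g(\Scal^T)$ is dense in $(u^*)^{\perp}$, so the displayed orthogonality forces $(\Delta_{g,\alpha}-\lambda I)^{-1}v_0$ to be orthogonal to all of $(u^*)^{\perp}$, i.e. $(\Delta_{g,\alpha}-\lambda I)^{-1}v_0 \in \Rbb\cdot u^*$. Applying $\Delta_{g,\alpha}-\lambda I$ and using $(\Delta_{g,\alpha}-\lambda I)u^* = 0$ yields $v_0 = 0$. (One must check that $(\Delta_{g,\alpha}-\lambda I)^{-1}v_0$ is well-defined: $v_0 \in \ker(\Delta_{g,\alpha}-\lambda I)^{\perp} = \im(\Delta_{g,\alpha}-\lambda I)$ by self-adjointness and Fredholmness, so a preimage in $\ker(\Delta_{g,\alpha}-\lambda I)^{\perp}$ exists and is unique.) This proves $E^{\perp}\cap \ker(\Delta_{g,\alpha}-\lambda I)^{\perp} = \{0\}$, hence $E$ is dense in $\ker(\Delta_{g,\alpha}-\lambda I)^{\perp}$, as claimed.

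I expect the main obstacle to be the bookkeeping in the middle step: correctly isolating which components of $dF_x(v,\mu,\nu,h)=0$ are genuinely free versus forced, and making sure that the $h$-variation can reach an arbitrary $P^{\perp}(dG_u)_g(h)$ while keeping $(v,\mu,\nu=0,h)$ in the tangent space $T_{(u,\lambda,g)}Q$ rather than merely in $T F^{-1}(0)$. The fact that $Q$ is \emph{open} in $F^{-1}(0)$ (noted before Lemma~\ref{lem:E-dense}) is what makes their tangent spaces agree, so this is not actually an extra constraint — but it needs to be stated. Everything else is a routine consequence of self-adjointness, the index-zero Fredholm property of $\Delta_{g,\alpha}-\lambda I$, and Proposition~\ref{prop:DPhi2-dense}.
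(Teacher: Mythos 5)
Your proposal is correct and is essentially the same argument as the paper's. Both proofs identify $T_{(u,\lambda,g)}Q$ with $\ker dF_x$, note $\im(\Delta_{g,\alpha}-\lambda I) = \ker(\Delta_{g,\alpha}-\lambda I)^{\perp} =: V$ by self-adjointness, and observe that the bounded inverse $L^{-1}$ of $\Delta_{g,\alpha}-\lambda I$ on $V$ yields, for each $h$, an element $v = -L^{-1}\,p\,(dG_u)_g(h) \in E$ (using that $(dG_u)_g(h)\perp u^*$ so the leftover piece in $\ker(\Delta_{g,\alpha}-\lambda I)$ is a multiple of $u$ alone, absorbable by $\mu$). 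The paper then concludes directly that $E \supset \im\bigl(L^{-1}\circ p\circ(dG_u)_g\bigr)$ is dense in $V$ because $(dG_u)_g(\Scal^T)$ is dense in $(u^*)^{\perp}$ and $L^{-1}\circ p$ is a continuous surjection onto $V$; you instead run the equivalent dual argument (any $v_0\in V$ orthogonal to $E$ must vanish), moving $L^{-1}$ across the pairing. Those are the same step stated in primal versus dual form. Two small inaccuracies in your write-up, neither affecting correctness: Proposition~\ref{prop:DPhi2-dense} as proved shows $(dG_u)_g(\Scal^T)^{\perp}\subseteq\Rbb u^*$ and hence density, but the containment $\im(dG_u)_g\subseteq(u^*)^{\perp}$ is a separate (easy) fact from the variational formula applied to the pair $(u,u^*)$, not literally a consequence of that proposition's proof; and in your last line, once you know $L^{-1}v_0\in V\cap\Rbb u^*$ you should note $V\cap\Rbb u^*=\{0\}$ (since $u^*\in\ker(\Delta_{g,\alpha}-\lambda I)=V^{\perp}$), rather than ``apply $\Delta_{g,\alpha}-\lambda I$ to $cu^*$,'' since $u^*\notin V$ and $L$ was defined as the restriction to $V$ — though the same conclusion $v_0=0$ follows either way.
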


\begin{proof}
The tangent space $T_{(u, \lambda,g)} Q$ 
is the kernel of $dF_{(u, \lambda,0,g)}$ which by 
(\ref{eqn:dF}) is the space of 
$(v, \mu, 0,h) \in H^k_{\alpha} \times \Rbb \times \Mcal$
such that
\begin{equation}
\label{eqn:tangent-space}
(\Delta_{g, \alpha}-\lambda I) v~ 
+~
(dG_u)(h)~
-~
\mu \cdot u~
=~
0.
\end{equation}

For simplicity of notation, let $V:=\ker(\Delta_{g,\alpha} -\lambda I)^{\perp}$.
The operator $\Delta_{g, \alpha} - \lambda I$ is self-adjoint, and hence 
$V =\im(\Delta_{g, \alpha} - \lambda I)$.
Let $p: H^k_{\alpha} \to H^k_{\alpha}$
be the orthogonal projection onto $V$. 
Let $L := (\Delta_g - \lambda I)$ denote the restriction
of $\Delta_{g,\alpha}$ to $V$. The map $L$ is an isomorphism onto $V$
with bounded inverse $L^{-1}$.

We claim that the image of $-L^{-1} \circ p \circ (dG_u)_g$ is contained in $E$.
Indeed, suppose that 
\[
v~ 
=~
-  L^{-1} \circ p \circ (dG_u)_g(h).   
\]
Then 
\[
L v~ 
=~
- p \circ (dG_u)_g(h),  
\]
and so 
\[
(\Delta_{g, \alpha} - \lambda I) v~ 
=~
-(dG_u)_g(h)~ 
+
w
\]
for some $w \in \ker(\Delta_{g,\alpha}-\lambda I)= \Rbb \cdot u \oplus \Rbb \cdot u^*$.
Since the image of $(dG_u)_g$ lies in $(u^*)^{\perp}$ and 
$Lv \in V \subset (u^*)^{\perp}$,  we have $w= c \cdot u$ for some $c \in \Rbb$.
Thus,
\[
(\Delta_{g, \alpha} - \lambda I) v~ 
+~
(dG_u)_g(h)~ 
-
c \cdot u~
=~
0,
\]
and therefore $v \in E$ as claimed.

By Proposition \ref{prop:DPhi2-dense}, 
the image of $(dG_u)_g$ is dense in $(u^*)^{\perp}$,
and hence the image of $L^{-1} \circ p \circ  (dG_u)_g$ is dense in $V$.
Therefore $E$ is dense in $V$.
\end{proof}

\begin{proof}[Proof of Proposition \ref{prop:nodal-complex}]

Assume that $k$ is large enough so that $H_{\alpha}^k \subset C^1$
by the Sobolev embedding theorem.
Then the map $f: H^k_{\alpha} \times \Rbb \times \Mcal^T \times M \to  \Rbb^2$ by
\[
f(u, \lambda, g, x) = u(x)
\]
is $C^1$. The differential is given by
\begin{equation}
\label{differential-f}
df_{(u, \lambda, g, x)}(v, \mu,h,z)~
=~
du_x(z)~ +~ v(x).
\end{equation}
We now restrict $f$ to the submanifold $Q \times M$, and, abusing 
notation, we continue to denote the restriction by $f$.  
The differential of $f$ is still given by (\ref{differential-f})
where now $(v, \mu, h,z)$ belongs to the tangent space 
$T_{(u, \lambda,g,x)}(Q\times M)$. 

We claim that $df_{(u, \lambda,g,x)}$ is a surjection if $f(u, \lambda, g,  x) =0$.
If not, then there exists $(u, \lambda, g,  x)\in f^{-1}(0)$ and 
$a \in \Rbb^2$ so that for each $(v,\mu,h,z) \in T_{(u,\lambda,g,x)}(Q \times  M)$
we have 
\[ 
a \cdot df_{(u,\lambda,g,x)}(v,\mu,h,z)=0.
\]
In particular,  $a \cdot df_{(u,\lambda,g,x)}(v,\mu, h,0)= a \cdot v(x)=0$. 
for each $v \in E$ where the set $E$---defined in Lemma \ref{lem:E-dense}---
is dense in $V = \ker(\Delta_{g,\alpha}-\lambda I)^{\perp}$. 
We have chosen $k$ so that the
$H^k$ topology is stronger than the $C^0$ topology, and so $a \cdot v(x)=0$
for each $v \in E$ implies that $a \cdot v(x)=0$ for each $v \in V$.
Separately, since $f(u, \lambda,g,x)=0$, we have $u(x)=0$ and hence $u^*(x)=0$.
Thus $a \cdot w(x)=0$ for each $ w \in \ker(\Delta_{g,\alpha} -\lambda I)$, and 
in sum we find that
$a \cdot w(x)=0$ for each $w \in H^k_{\alpha} = V \oplus \ker(\Delta_{g,\alpha} -\lambda I)$. 

Let $w \in H^k_{\alpha}$ be such that $w(x)\neq 0$. If $a \neq 0$, then 
there exists a rotation $R \in SO(2)$ and a positive real number $c$ 
so that $a = c \cdot R w(x)$.  
But $c \cdot R w  \in H^k_{\alpha}$ and so using the above 
we have $0=a \cdot (c \cdot R w(x))= c \cdot a \cdot a$,
a contradiction. Hence $a=0$ and $df_{(u, \lambda, g, x)}$ is surjective
for each $(u, \lambda, g,  x)\in f^{-1}(0)$.

Let $\pi: Q \times M \to Q$ denote the natural projection.
Given $q=(u, \lambda, g)\in Q$ let $f_q$ denote the restriction
of $f$ to $\pi^{-1}(q)$. Because, $df_{(u, \lambda, g,x)}$ is surjective
for each $(u, \lambda, g,x) \in f^{-1}(0)$, the parametric transversality 
theorem applies to provide a residual subset $\Pcal \subset Q$, 
so that if $q=(u,\lambda,g)$ lies in $\Pcal$ then $d(f_q)_x=du_x$ 
is surjective for each $x \in u^{-1}(0)$.

Let $\Rcal$ be the set of $g \in \Mcal^T$ such that 
$\ker(\Delta_{g, \alpha} - \lambda I)$ is two dimensional
for each $\lambda \in \spec(\Delta_{g, \alpha})$.
The first part of Theorem \ref{thm:main}
implies that $\Rcal$ is residual in $\Mcal^T$. 
Since $\pi:Q \to \Mcal^T$ is open, the set $\pi(\Pcal)$ is residual in $\Mcal^T$,
and hence  $\pi(\Pcal) \cap \Rcal$ is residual in $\Mcal^T$.
\end{proof}

\begin{proof}[Proof of part (2) of \ref{thm:main}]
If an eigenfunction is $T$-invariant, then it descends
to an eigenfunction of an elliptic operator on $M/T$, and 
it follows from Theorem 8 in \cite{Uhlenbeck} that, for
a residual set of $g$, the nodal set of each $T$-invariant 
eigenfunction of $\Delta_g$ is smooth.

If an eigenfunction is not $T$-invariant and $g$ lies in the
residual subset obtained in the proof of part (1) of Theorem 
\ref{thm:main}, then the eigenfunction is the first coordinate $u_1$
of a vector-valued function $u=(u_1,u_2)^t \in H^k_{\alpha}$. If 
$(u_1(x), u_2(x))^t=0$, then Proposition \ref{prop:nodal-complex}
implies that $d(u_1)_x$ is surjective.  If $u_1(x)=0$ but 
$u_2(x) \neq 0$, then from (\ref{eqn:partial_j}) we have  
$(\partial_j u)(x)= \alpha_j \cdot u_2$, and since $\alpha \neq 0$,
the differential $d(u_1)_x$ is surjective. Therefore, 
the implicit function theorem implies that $u_1^{-1}(0)$ 
is smooth.
\end{proof}

The following generalizes parts (2) and (3) of Theorem 1.5 in \cite{Jung-Zelditch}.

\begin{thm}
\label{thm:connected-nodal-sets}
Suppose that $\dim(M/T) \geq 2$ and let $\alpha \in \Zbb^d  \setminus \{0\}$.
There exists a residual subset $\mathcal{Y}_\alpha$ in $\Mcal^{T}$ so that
if $g$ lies in $\Ycal_{\alpha}$ 
and $u=(u_1,u_2)^t$ is an eigenfunction of $\Delta_{g, \alpha}$
whose values include zero, 
then the nodal set of $u_1$ (resp. $u_2$) is a connected smooth manifold,
and its complement consists of exactly two connected 
components.
\end{thm}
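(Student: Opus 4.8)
The plan is to combine the regularity result of Proposition~\ref{prop:nodal-complex} with a dimension count and a transversality argument that forces the nodal set to be \emph{connected}; once it is a connected smooth hypersurface, its complement has at most two components, and since $u_1$ changes sign across a regular nodal set, the complement has exactly two. Throughout we work on the residual set where $\dim \ker(\Delta_{g,\alpha}-\lambda I)=2$ (part~(1) of Theorem~\ref{thm:main}) and where $0 \in \Rbb^2$ is a regular value of every eigenfunction $u \in \ker(\Delta_{g,\alpha}-\lambda I)$ (Proposition~\ref{prop:nodal-complex}); note that $u^{-1}(0) \subset M$ is then a smooth submanifold of codimension $2$, i.e.\ of dimension $n-2 = \dim(M/T) + (d-2)$. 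Since $u_1^{-1}(0)$ is a smooth hypersurface (by part~(2) of Theorem~\ref{thm:main}, whose proof uses exactly that $d(u_1)_x$ is onto whenever $u_1(x)=0$, using $\partial_j u = \alpha_j u^*$ when $u_2(x)\neq 0$), the only remaining issue is connectivity.

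The core of the argument is to show that, for $g$ in a residual set, the submanifold $u^{-1}(0) \subset M$ is connected, and moreover that $u_1^{-1}(0) \setminus u^{-1}(0)$ does not disconnect $u_1^{-1}(0)$. For the first point I would set up a parametric transversality argument for the map that records \emph{two} values of the eigenfunction simultaneously: on the manifold $Q$ of Proposition~\ref{prop:nodal-complex}, consider
\[
  f_2 : Q \times (M \times M \setminus \Delta) \longrightarrow \Rbb^2 \times \Rbb^2, \qquad
  f_2(u,\lambda,g,x,y) = (u(x), u(y)),
\]
whose differential in the $(v,h)$-directions is $(du_x(\cdot) + v(x),\, du_y(\cdot)+v(y))$. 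Using Lemma~\ref{lem:E-dense} (density of the image of $v \mapsto$ tangent directions in $\ker(\Delta_{g,\alpha}-\lambda I)^\perp$) together with the fact that $H^k_\alpha$ separates points of $M$ up to the $SO(2)$-action in the precise sense used at the end of the proof of Proposition~\ref{prop:nodal-complex}, one shows $f_2$ is transverse to $\{0\}$; since the target has dimension $4$ and the fiber has dimension $\dim(M\times M)=2n$, the preimage in each fiber $f_{2,q}^{-1}(0)$ is a manifold of dimension $2n-4 = 2(n-2)$, i.e.\ the ``double point set'' of $u^{-1}(0)$ has exactly the expected dimension. This does not yet give connectivity, so the actual mechanism I would use is the one sketched in the last paragraph of the introduction: exhibit a metric perturbation supported near a putative separating set that merges components, i.e.\ show that having a \emph{disconnected} nodal set is a codimension-$\geq 1$ phenomenon, cut out by a map to which parametric transversality applies and whose target is strictly larger than the source's relevant piece (a ``negative expected dimension'' count). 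Concretely: if $N:=u^{-1}(0)$ has two components $N_0, N_1$, pick $x_i \in N_i$ and build, using Lemma~\ref{lem:mixed-perturbation} with $X$ a $T$-orthogonal field and $Y=\partial_j$, a perturbation whose effect on the two ``arcs'' of $u_1^{-1}(0)$ emanating from $x_0$ versus $x_1$ is controlled independently; transversality then forces, for generic $g$, that no such configuration persists, because the relevant jet-evaluation map hits a target of too-large dimension.

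The step I expect to be the main obstacle is precisely establishing connectivity: transversality arguments naturally produce statements about the \emph{dimension} of bad loci, not about connectedness, and ``$N$ is disconnected'' is not literally a transversality-of-sections condition at a point. Jung--Zelditch handled the analogous point by an intricate combinatorial analysis of the nodal domains; the authors advertise a ``simpler proof that each nodal set is connected,'' so the intended route is likely a clean Morse-theoretic or winding-number observation: because $u:M \to \Rbb^2$ is equivariant with $u(\theta^{-1}x) = R_{\alpha\cdot\theta}u(x)$ and $0$ is a regular value, the class $[u/|u|] \in H^1$ of the ``phase'' on $M \setminus N$ is a nonzero multiple of a generator determined by $\alpha$, and a standard argument (à la the computation that a map of nonzero degree to $S^1$ has connected ``level set'') shows $N$ is connected; dually $M \setminus u_1^{-1}(0)$ has exactly two components since $u_1$ is, up to the $T$-action, the real part of a nowhere-zero-on-its-complement function whose phase winds. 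I would therefore structure the final proof as: (i) invoke part~(1) of Theorem~\ref{thm:main} and Proposition~\ref{prop:nodal-complex} to get $0$ a regular value of $u$ and of $u_1$; (ii) use the equivariance $\partial_j u = \alpha_j u^*$ plus $\alpha \neq 0$ to see that $u^{-1}(0)$ is transverse to every torus orbit and maps to a smooth codimension-$2$ submanifold downstairs; (iii) run the topological/winding argument on $M/T$, using $\dim(M/T)\geq 2$ to ensure the relevant $H^1$ or connectivity statement is non-degenerate, concluding $u_1^{-1}(0)$ is connected with two-component complement; (iv) pull everything back and intersect the finitely many residual sets over $\lambda$, $\alpha$ to obtain $\Ycal_\alpha$.
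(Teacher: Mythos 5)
Your first step is correct and matches the paper: Proposition~\ref{prop:nodal-complex} gives that $0\in\Rbb^2$ is a regular value, so $u^{-1}(0)$ is a smooth codimension-two submanifold, and $u_1^{-1}(0)$ is a smooth hypersurface; and once connectivity is known, sign-change plus a classical differential-topology fact give exactly two complementary components. You also correctly diagnose the core difficulty: transversality gives dimension counts for bad loci, not connectedness, and ``$u_1^{-1}(0)$ is disconnected'' is not a pointwise transversality condition. But the two mechanisms you propose to fill this gap do not work. The double-point map $f_2$ on $Q\times(M\times M\setminus\Delta)$ gives information about the expected dimension of self-intersections of $u^{-1}(0)$, which is irrelevant here since $u^{-1}(0)$ is already known to be embedded. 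And the winding/degree heuristic is not a theorem: a map $M\to S^1$ with nonzero class in $H^1$ can certainly have disconnected level sets (think of a finite covering of the circle), so ``$[u/|u|]$ is a nonzero multiple of a generator'' does not by itself force $u^{-1}(0)$ or $u_1^{-1}(0)$ to be connected.

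The paper's actual argument is more elementary and does not pass through transversality or cohomology at all. It exploits three concrete facts. First, $u^{-1}(0)$ is $T$-invariant and projects to a closed codimension-two submanifold $\pi(u^{-1}(0))\subset M/T$; since $\dim(M/T)\geq 2$, removing a codimension-two set leaves $\pi(M)\setminus\pi(u^{-1}(0))$ connected (this is exactly where the hypothesis $\dim(M/T)\geq 2$ enters). Second, on $N:=u_1^{-1}(0)\setminus u^{-1}(0)$ one has $u_2\neq 0$, so by $\partial_i u_1=\alpha_i u_2$ the vertical field $\partial_i$ is transverse to $N$ and $\pi|_N$ is a submersion onto the connected set $\pi(M)\setminus\pi(u^{-1}(0))$. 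Third, pick $y_0\in u^{-1}(0)$ with $\pi(y_0)=x_0$; the full orbit $\pi^{-1}(x_0)$ is a connected torus contained in $u_1^{-1}(0)$. Now for any $y\in N$, choose a path $\gamma$ in $\pi(M)$ from $\pi(y)$ to $x_0$ that avoids $\pi(u^{-1}(0))$ except at the endpoint, lift it horizontally through the submersion $\pi|_N$, and use compactness of $M$ to extract a limit of the lift that lies in $\pi^{-1}(x_0)$. Hence every component of $u_1^{-1}(0)$ meets the connected torus $\pi^{-1}(x_0)$, so $u_1^{-1}(0)$ is connected. This path-lifting step, and the role of the connected orbit over a zero of $u$ as the common ``meeting point'' of all components, is precisely the idea missing from your proposal.
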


\begin{proof}
Let $u=(u_1,u_2)^t$ be an eigenfunction of $\Delta_{g, \alpha}$ 
and let $\pi: M \to M/T$ be the standard submersion.
By Proposition \ref{prop:nodal-complex} and the implicit 
function theorem, we find that the set $u^{-1}(0)$ 
is a smooth submanifold of $M$ with dimension $n-2$.
Moreover, since $u(\theta^{-1}\cdot x) = R_{\alpha \cdot \theta} \cdot u(x)$
for each $\theta$ in the $d$-dimensional torus $T$, 
the action of $T$ on $M$ preserves $u^{-1}(0)$.
In particular, $\pi(u^{-1}(0))$ is of real codimension two, and hence 
$\pi(M)\setminus \pi(u^{-1}(0))$ is connected.
Moreover, if $x \in \pi(u^{-1}(0))$, then $\pi^{-1}(x)$ is 
a connected torus embedded in $u^{-1}(0)$.

Define $N:=u_1^{-1}(0) \setminus u^{-1}(0)$. We claim that 
the restriction of $\pi$ to $N$ is a submersion onto its image 
$\pi(N)= \pi(M) \setminus \pi(u^{-1}(0))$. Indeed, since $\alpha \neq 0$, 
there exists $i$ such that $\alpha_i \neq 0$. 
Recall from \S  \ref{sec:prelim} that $\partial_i u = \alpha_i \cdot u^*$
and in particular $\partial_i u_1 =  \alpha_i \cdot u_2$.
Thus, we have $\partial_{i} u_1(x) \neq 0$
and $u_1(x)=0$ if and only if $u(x) \neq 0$.
In particular, the vector field $\partial_i$ is transverse to $N$
and since $\pi$ is a submersion, the restriction $\pi|_N$ is also
a submersion. 
Each fiber of $\pi|_N$ is  diffeomorphic to the disjoint union of $2m$ copies
of the $d-1$ dimensional torus where $m = \gcd(\alpha_1,\ldots, \alpha_d)$.

By hypothesis, $u^{-1}(0)$ is nonempty and hence contains some $y_0$. 
Because $\alpha \neq 0$, the eigenfunction
$u$ vanishes on the fiber $\pi^{-1}(x_0)$ where $x_0 = \pi(y_0)$. 
The fiber $\pi^{-1}(x_0)$ is a torus and hence is connected.
Thus, to show that $u_1^{-1}(0)$ is connected, it suffices 
to show that, for each $y \in N$, the connected component 
$K_y$ of $u_1^{-1}(0)$ 
that contains $y$ also contains a point in $u^{-1}(x_0)$.

Since $\pi(M)$ is connected and $\pi(u^{-1}(0))$ is a smooth 
closed submanifold of codimension $2$, there exists a path 
$\gamma:[0,1] \to \pi(M)$ such that $\gamma(0)= \pi(y)$, $\gamma(1)=x_0$ 
and $\gamma(t) \notin \pi(u^{-1}(0))$ for each $t <1$. 
Choose a horizontal distribution for $\pi: N \to \pi(N)$.
Let $\tgamma:[0, 1) \to N$ denote the horizontal lift of $\gamma|_{[0,1)}$.

Suppose that $t_k \in [0,1)$  converges to $1$.
Since $M$ is compact, the sequence $\tgamma(t_k)$ has a 
convergent subsequence that converges to some $z \in M$. 
On the other hand, $\pi(\tgamma(t_k))= \gamma(t_k)$ converges to $x_0$,
and so $z \in \pi^{-1}(x_0)$. Hence $K_y$ intersects $\pi^{-1}(x_0)$. 
Therefore, $u_1^{-1}(0)$ is connected. 

Finally, since $\alpha \neq 0$, the eigenfunction $u_1$
takes on both positive and negative values, and hence
the set $M \setminus u^{-1}_{1}(0)$ has at least two components.
In particular, $u^{-1}_{1}(0)$ separates $M$ and a classical 
result in differential topology gives that $M \setminus u^{-1}_{1}(0)$
has exactly two connected components.\footnote{See, for example, 
Lemma 4.4.4 in \cite{Hirsch}.}
\end{proof}

Let $\Ycal = \bigcap_{\alpha \neq 0} \Ycal_{\alpha}$.

\begin{coro}
\label{coro:vanish-orbit}
Suppose $\dim(M/T) \geq 2$.
Let $g$ belong to the residual subset $\Ycal \subset \Mcal ^T$.
If $\phi: M \to \Rbb$ is a non-invariant eigenfunction of $\Delta_g$
such that $\phi$ vanishes on some $T$-orbit, then the nodal set 
$\phi^{-1}(0)$ is a connected smooth hypersurface 
and its complement consists of two components.
\end{coro}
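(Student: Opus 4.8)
The plan is to reduce the statement directly to Theorem \ref{thm:connected-nodal-sets}. Suppose $g$ lies in $\Ycal = \bigcap_{\alpha \neq 0} \Ycal_\alpha$, and let $\phi: M \to \Rbb$ be a non-invariant eigenfunction of $\Delta_g$ that vanishes on some $T$-orbit. Since $\phi$ is non-invariant, the decomposition $H^k = H^k_0 \oplus \bigoplus_\alpha \iota_\alpha(H^k_\alpha)$ together with the fact that $\Delta_g$ preserves each summand means that $\phi$ has a nonzero component in $\iota_\alpha(H^k_\alpha)$ for some $\alpha \neq 0$; write this component as $\iota_\alpha(u)$ with $u = (u_1, u_2)^t \in H^k_\alpha$ an eigenfunction of $\Delta_{g,\alpha}$, so $u_1 = \iota_\alpha(u)$ is itself an eigenfunction of $\Delta_g$ with the same eigenvalue.

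The first point to settle is that we may take $\phi$ itself to be of the form $u_1$ for a single weight $\alpha$. The cleanest way: since $g \in \Ycal$, part (1) of Theorem \ref{thm:main} applies, so the eigenspace of $\Delta_g$ containing $\phi$ is either one-dimensional and $T$-invariant (excluded, as $\phi$ is non-invariant) or two-dimensional with $T$ acting irreducibly, hence equal to $\iota_\alpha(H^k_\alpha \cap \ker(\Delta_{g,\alpha} - \lambda I))$ for exactly one $\alpha$ (up to sign). Thus $\phi = \iota_\alpha(u)= u_1$ for some eigenfunction $u = (u_1,u_2)^t$ of $\Delta_{g,\alpha}$. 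Next I would observe that the hypothesis "$\phi$ vanishes on some $T$-orbit" translates to "$u$ vanishes on some $T$-orbit": if $\phi = u_1$ vanishes on the orbit $T\cdot x$, then since $u_1(\theta^{-1}\cdot x)$ and $u_2(\theta^{-1}\cdot x)$ are related to $u_1(x), u_2(x)$ by the rotation $R_{\alpha\cdot\theta}$ (equation \eqref{eqn:twist}), and by varying $\theta$ the angle $\alpha\cdot\theta$ ranges over a dense subset of $\Rbb/2\pi\Zbb$ (as $\alpha\neq 0$, at least one coordinate map $\theta \mapsto \alpha\cdot\theta$ is surjective onto the circle), the vanishing of $u_1$ on the whole orbit forces $u_2$ to vanish there as well. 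Hence $u$ takes the value $0 \in \Rbb^2$, i.e.\ $0$ is among the values of $u$.

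Now Theorem \ref{thm:connected-nodal-sets} applies verbatim: since $\dim(M/T)\geq 2$ and $g \in \Ycal_\alpha$, the nodal set $u_1^{-1}(0) = \phi^{-1}(0)$ is a connected smooth manifold whose complement has exactly two connected components. By part (2) of Theorem \ref{thm:main}, $\phi^{-1}(0)$ is moreover a smooth hypersurface. This gives the conclusion. The only real content beyond invoking the earlier results is the bookkeeping in the previous paragraph — identifying which weight space $\phi$ lives in and promoting "vanishes on an orbit" from $\phi$ to the pair $u$ — and I expect the density-of-$\{\alpha\cdot\theta\}$ argument to be the one subtle point, though it is elementary. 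Alternatively one could avoid invoking part (1) of Theorem \ref{thm:main} and instead argue that \emph{some} weight-$\alpha$ component $u_1$ of $\phi$ vanishing on the orbit is not automatic unless $\phi$ itself is in a single isotypical piece, so using the two-dimensionality of eigenspaces really is the efficient route.
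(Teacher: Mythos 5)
Your proposal is correct and follows essentially the same route as the paper: reduce to Theorem \ref{thm:connected-nodal-sets} by (i) using generic irreducibility from Theorem \ref{thm:main}(1) to put $\phi = u_1$ for a single weight $\alpha \neq 0$, and (ii) observing that vanishing of $u_1$ on an orbit forces $u_2$ to vanish there too. The only cosmetic difference is that the paper passes to the infinitesimal form of the equivariance relation, invoking $\partial_j u_1 = \alpha_j u_2$ from \eqref{eqn:vert-derivative} together with the fact that $\partial_j u_1$ vanishes along the orbit, whereas you use the finite rotation relation \eqref{eqn:twist} and surjectivity of $\theta \mapsto \alpha\cdot\theta$; these are equivalent and equally clean.
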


\begin{proof}
Since $g \in \Ycal$ and $\phi$ is non-invariant, there exists 
$\alpha \in \Zbb \setminus \{0\}$ and  
$u =(u_1,u_2)^t \in H^k_{\alpha}$ such that $\phi=u_1$.
If $\phi(x)=u_1(x)=0$ for each $x$ in some orbit of $T$, 
then $\partial_j u_1(x)=0$ vanishes for each $x$ in this orbit.
Thus, if $\alpha_j \neq 0$, then from (\ref{eqn:vert-derivative})
we find that $u_2(x)= \alpha_j^{-1} \cdot \partial_j u_1(x)$
for $x$ in the orbit. The claim now follows from 
Theorem \ref{thm:connected-nodal-sets}.
\end{proof}


\section{Circle bundles}
\label{sec:oriented-circle}

In this section, we explain how the space $H^k_{\alpha}$, $\alpha \neq 0$,
may be identified with sections of an oriented 
rank two real vector bundle, and we translate the results of \S 
\ref{sec:nodal-sets} into this language. Then we briefly discuss
principal $SO(2)$ bundles and the natural group structure 
on the set such bundles over a fixed base.    
Finally, we prove Theorem \ref{thm:nodal=2}.

Let $\alpha \neq 0$, and define the vector bundle $\pi: E^{\alpha} \to M/T$ 
as the equivariant quotient of the trivial bundle $M \times \Rbb^2 \to M$ 
where the action of $T$ on $M \times \Rbb^2 \to M$ 
is given by 
\begin{equation}
\label{eqn:action}
\theta \cdot (x,w)~
:=~
(\theta \cdot x, R_{\alpha \cdot \theta} \cdot w)
\end{equation}
with $R_{\psi}$ as in (\ref{eqn:R}).
A section $x \to (x,u(x))$ of the trivial bundle 
$M \times \Rbb^2 \to M$ descends to a section $\sigma_u$ of $E^{\alpha}$
if and only if 
\begin{equation}
\label{eqn:section}
(\theta \cdot x, u(\theta \cdot x))~
= 
(\theta \cdot x, R_{\alpha \cdot \theta} \cdot u(x))
\end{equation}
for each $x \in M$ and $\theta \in T$. 
In other words, the map $u \mapsto \sigma_u$ is an isomorphism
from $H^k_{-\alpha}$ onto the space 
of $H^k$  sections of $E^{\alpha}$.

Note that the action of $SO(2)$ provides an orientation of each 
fiber of $E^{\alpha}$. 
By removing the zero section of $E^{\alpha}$ and quotienting each fiber
by the action of $\Rbb^+$, we obtain an oriented circle bundle
$M^{\alpha} \to M/T$.

\begin{prop}
\label{prop:oriented}
If the oriented circle bundle $M^{\alpha}$ is nontrivial, 
then each 
eigenfunction $u$ of $\Delta_{g, \alpha}$ vanishes on some torus orbit.
\end{prop}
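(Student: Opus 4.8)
The plan is to argue by contraposition: if some eigenfunction $u$ of $\Delta_{g,\alpha}$ never vanishes on a torus orbit, I will use $u$ to produce a nowhere-vanishing section of $M^\alpha$, which forces that oriented circle bundle to be trivial. First I would recall that, via the isomorphism $u \mapsto \sigma_u$ described just above, an eigenfunction $u=(u_1,u_2)^t \in H^k_\alpha$ (equivalently, a suitable element of $H^k_{-\alpha}$ after the sign bookkeeping) corresponds to a section $\sigma_u$ of the associated rank-two bundle $E^\alpha \to M/T$. The key observation is a consequence of \eqref{eqn:vert-derivative}: since $\alpha \neq 0$, pick $j$ with $\alpha_j \neq 0$, and then $\partial_j u = \alpha_j \cdot u^*$, so at any point $x$ the pair $\{u(x), u^*(x)\}$ spans $\Rbb^2$ as soon as $u(x)\neq 0$. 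In particular, $u$ vanishes at a point $x$ if and only if $u$ vanishes on the entire $T$-orbit through $x$ (this is exactly the reasoning already used in the proof of Corollary \ref{coro:vanish-orbit}). Hence the hypothesis ``$u$ vanishes on no torus orbit'' is equivalent to ``$u$ is nowhere zero on $M$.''

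Next I would descend this to the quotient. A nowhere-vanishing $u$ gives a section $\sigma_u$ of $E^\alpha$ that misses the zero section, so its image lies in $E^\alpha \setminus \{0\}$; composing with the quotient by the $\Rbb^+$-action on each fiber yields a genuine section $M/T \to M^\alpha$ of the oriented circle bundle. Since $M/T$ admits a section of $M^\alpha$, the circle bundle $M^\alpha \to M/T$ is trivial. Contrapositively, if $M^\alpha$ is nontrivial then no such nowhere-vanishing eigenfunction exists, i.e. every eigenfunction of $\Delta_{g,\alpha}$ must vanish somewhere, hence on a full torus orbit.

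The only genuinely delicate point is the bookkeeping between $H^k_\alpha$ and $H^k_{-\alpha}$ and the precise statement that a principal $SO(2)$-bundle (equivalently, an oriented real rank-two bundle, equivalently an oriented circle bundle) is trivial if and only if it admits a continuous section — but this is standard, since the structure group $SO(2)$ is connected and the obstruction to a section of a circle bundle is its Euler class, which vanishes precisely when a section exists. I do not expect any analytic difficulty here: the eigenfunction is smooth (or at least $C^1$ by Sobolev embedding, which suffices to talk about its zero set and to build a continuous section), and compactness of $M$ is not even needed for this direction. Thus the main ``obstacle'' is purely a matter of stating the correspondence between $E^\alpha$, $M^\alpha$, and their sections cleanly, which the preceding discussion in \S\ref{sec:oriented-circle} already sets up.
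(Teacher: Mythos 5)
Your argument is correct and follows essentially the same route as the paper: turn a nowhere-vanishing eigenfunction $u\in H^k_\alpha$ into a nowhere-vanishing section $\sigma_u$ of $E^\alpha$, descend to a global section of the circle bundle $M^\alpha$, conclude triviality, and take the contrapositive. One small expository slip: the equivalence between ``$u$ vanishes on no torus orbit'' and ``$u$ is nowhere zero'' is immediate from the equivariance relation \eqref{eqn:twist} (if $u(x)=0$ then $u(\theta^{-1}\cdot x)=R_{\alpha\cdot\theta}\cdot u(x)=0$ for all $\theta$), not from the spanning of $\{u(x),u^*(x)\}$; the latter is what Corollary~\ref{coro:vanish-orbit} uses to pass from the vanishing of the single scalar component $u_1$ on an orbit to the vanishing of the full vector $u$, which is a different implication.
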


\begin{proof}
Let $u \in H^k_{\alpha}$ be an eigenfunction of $\Delta_{g,\alpha}$.
By elliptic regularity, the function $u$ is smooth, and it corresponds
to a smooth section $\sigma_u$ of $E^{\alpha}$. If $u(x) \neq 0$
for all $x \in M$, then $\sigma_u(b) \neq 0$ for each $b \in M/T$. 
Thus $\sigma_u(b)$ would define a global section of $M^{\alpha}$. 
But then $M^{\alpha}$ would be trivial, contradicting our assumption. 
\end{proof}

\begin{prop}
\label{prop:torus-nontrivial}
If each oriented circle bundle $M^{\alpha} \to M/T$
associated to the torus bundle 
$M \to M/T$ is nontrivial, then there exists a residual subset 
of $\Mcal^T$ such that for each $g$ in this set and each 
non-invariant eigenfunction $\phi$ of $\Delta_g$, the 
nodal set $\phi^{-1}(0)$ is a connected smooth submanifold
whose complement has exactly two components. 
\end{prop}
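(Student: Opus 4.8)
The plan is to assemble \texttt{Proposition~\ref{prop:torus-nontrivial}} directly from the results already in hand, using the identification between $H^k_\alpha$-eigenfunctions and sections of the vector bundles $E^\alpha$. First I would fix a non-invariant eigenfunction $\phi$ of $\Delta_g$. By the isotypical decomposition of $\S\ref{sec:prelim}$, after restricting to a residual set of metrics, $\phi$ appears as the first coordinate $u_1$ of some $u = (u_1,u_2)^t \in H^k_\alpha$ with $\alpha \neq 0$ (this is exactly the setup used in the proof of part (2) of Theorem~\ref{thm:main}). The hypothesis is that every oriented circle bundle $M^\alpha \to M/T$ is nontrivial, so Proposition~\ref{prop:oriented} applies to this particular $\alpha$: the eigenfunction $u$ vanishes on some torus orbit.

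Next I would invoke Corollary~\ref{coro:vanish-orbit}. That corollary requires $\dim(M/T) \geq 2$ and $g$ in the residual set $\Ycal = \bigcap_{\alpha \neq 0} \Ycal_\alpha$, and it concludes that a non-invariant eigenfunction vanishing on a $T$-orbit has connected smooth nodal set with complement having exactly two components. The dimension hypothesis $\dim(M/T) \geq 2$ is not stated in the proposition as written, so I would either add it or note that when $\dim(M/T) \leq 1$ the torus bundle $M \to M/T$ over a point or circle carries no nontrivial oriented circle bundle $M^\alpha$ of the relevant kind, making the hypothesis vacuous (over a point, every bundle is trivial; over $S^1$, oriented $SO(2)$-bundles are classified by $H^2(S^1;\Zbb)=0$), so the claimed implication holds trivially there. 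Assuming this point is handled, the conclusion of Corollary~\ref{coro:vanish-orbit} is exactly the conclusion of the proposition.

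The only remaining bookkeeping is the residual set. For each $\alpha \neq 0$, Theorem~\ref{thm:connected-nodal-sets} (and hence Corollary~\ref{coro:vanish-orbit}) supplies the residual set $\Ycal_\alpha \subset \Mcal^T$; their countable intersection $\Ycal = \bigcap_{\alpha \in \Zbb^d\setminus\{0\}} \Ycal_\alpha$ is again residual since a countable intersection of residual sets in a Baire space is residual. I would also intersect with the residual set from part (2) of Theorem~\ref{thm:main} that guarantees $0$ is a regular value so the nodal set is a smooth hypersurface, and with the residual set from part (1) guaranteeing the two-dimensionality used implicitly. For $g$ in this intersection, any non-invariant eigenfunction $\phi$ comes from some $H^k_\alpha$, Proposition~\ref{prop:oriented} forces vanishing on an orbit, and Corollary~\ref{coro:vanish-orbit} finishes the argument.

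I do not expect a genuine obstacle here; the proposition is essentially a corollary of Proposition~\ref{prop:oriented} together with Corollary~\ref{coro:vanish-orbit}. The one subtlety worth being careful about is the $\dim(M/T) \geq 2$ hypothesis needed to invoke Corollary~\ref{coro:vanish-orbit}: either it should be added to the statement of Proposition~\ref{prop:torus-nontrivial}, or one should observe explicitly that in the low-dimensional base cases the nontriviality hypothesis cannot be satisfied, so there is nothing to prove. Everything else is a routine concatenation of the already-established genericity statements.
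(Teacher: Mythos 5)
Your proof is correct and matches the paper's, which simply combines Proposition~\ref{prop:oriented} with Corollary~\ref{coro:vanish-orbit}. Your additional observation about the $\dim(M/T)\geq 2$ hypothesis is a fair point the paper leaves implicit, and your resolution is correct: since $n>d$ forces $\dim(M/T)\geq 1$, and over $S^1$ one has $H^2(S^1;\Zbb)=0$ so every oriented circle bundle is trivial, the nontriviality hypothesis can only be met when $\dim(M/T)\geq 2$.
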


\begin{proof}
Combine Corollary \ref{coro:vanish-orbit} and Proposition \ref{prop:oriented}.
\end{proof}

\begin{proof}[Proof of Theorem~\ref{thm:nodal=2}]
The Euler class $e(N) \in H_2(B,\Zbb)$ is a complete 
invariant for oriented circle bundles $N$ over $B$ \cite{Morita}. 
In particular, if $M$ is a nontrivial oriented circle bundle,
then $e(M) \neq 0$. Thus by hypothesis, the class $e(M)$
has infinite order in $H_2(B,\Zbb)$. 
If $\alpha \in \Zbb$, then $e(M^{\alpha})=  \alpha \cdot e(M)$.
Thus, if $\alpha \neq 0$, then $e(M^{\alpha}) \neq 0$
and so $M^{\alpha}$ is not trivial. Thus, the claim 
follows from Proposition \ref{prop:torus-nontrivial}.
\end{proof}


\end{document}